\documentclass{article}
\usepackage{amssymb,amsbsy}
\usepackage{amsmath}
\usepackage{amsthm}
\usepackage{mathrsfs}
\usepackage{color}
\usepackage[dvipsnames]{xcolor}
\usepackage{tikz}

\righthyphenmin=2

\newtheorem{theorem}{Theorem}[section]
\newtheorem{lemma}[theorem]{Lemma}
\newtheorem{proposition}[theorem]{Proposition}
\newtheorem{corollary}[theorem]{Corollary}

\theoremstyle{definition}

\newtheorem{example}[theorem]{Example}

\theoremstyle{remark}
\newtheorem{remark}[theorem]{Remark}


\begin{document}
	
	\title{The extended Frobenius problem for Fibonacci sequences incremented by a Fibonacci number}
	
	\author{Aureliano M. Robles-P\'erez\thanks{Departamento de Matem\'atica Aplicada \& Instituto de Matem\'aticas (IMAG), Universidad de Granada, 18071-Granada, Spain. \newline E-mail: \textbf{arobles@ugr.es} (\textit{corresponding author}); ORCID: \textbf{0000-0003-2596-1249}.}
		\mbox{ and} Jos\'e Carlos Rosales\thanks{Departamento de \'Algebra \& Instituto de Matem\'aticas (IMAG), Universidad de Granada, 18071-Granada, Spain. \newline E-mail: \textbf{jrosales@ugr.es}; ORCID: \textbf{0000-0003-3353-4335}.} }
	
	\date{\today}
	
	\maketitle
	
	\begin{abstract}
		We study the extended Frobenius problem for sequences of the form $\{f_a+f_n\}_{n\in\mathbb{N}}$, where $\{f_n\}_{n\in\mathbb{N}}$ is the Fibonacci sequence and $f_a$ is a Fibonacci number. As a consequence, we show that the family of numerical semigroups associated to these sequences satisfies the Wilf's conjecture.
	\end{abstract}
	\noindent {\bf Keywords:} Fibonacci number, Fibonacci sequence, Frobenius problem, numerical semigroup, Ap\'ery set, Frobenius number, genus, Wilf's conjecture.
	
	\medskip
	
	\noindent{\it 2010 AMS Classification:} 11D07, 11B39 (Primary); 11A67, 05A17 (Secondary). 	
	
	\section{Introduction}
	
	Let $S\subseteq \mathbb{N}$ be the set generated by the sequence of positive integers $(a_1,\ldots,a_e)$, that is, $S=\langle a_1,\ldots,a_e \rangle = a_1{\mathbb N}+\cdots+a_e{\mathbb N}$. If $\gcd(a_1,\ldots,a_e)=1$, then it is well known that $S$ has a finite complement in $\mathbb{N}$. This fact leads to the classical problem in additive number theory called the Frobenius problem: what is the greatest integer $\mathrm{F}(S)$ which is not an element of $S$? Although this problem is solved for $e=2$ (see \cite{sylvester}), we have that it is not possible to find a polynomial formula to compute $\mathrm{F}(S)$ if $e\geq3$ (see \cite{curtis}). Therefore, many efforts have been made to obtain partial results or to develop algorithms to get the answer to this question (see \cite{alfonsin}).
	
	Another interesting question is to compute the cardinality $\mathrm{g}(S)$ of the set $\mathbb{N}\setminus S$. In fact, sometimes finding formulas for $\mathrm{F}(S)$ and $\mathrm{g}(S)$ is known as the extended Frobenius problem.
	
	Let us recall that the Fibonacci sequence is given by the recurrence relation $f_{n+2} = f_{n+1} + f_n$ for $n\geq0$ and the initial conditions $f_0=0, f_1=1$. This sequence has been widely studied and is present in many real phenomena (for a popular paper, see \cite{gardner}).
	
	Among others, the main goal of this work is to solve the extended Frobenius problem for $S$ generated by Fibonacci sequences incremented by a Fibonacci number. That is, if $\{f_0,f_1,\ldots,f_n,\ldots\}$ is a Fibonacci sequence and $f_a$ is a Fibonacci number, then we will consider $S(a)=\langle f_a+f_0,f_a+f_1,\ldots,f_a+f_n,\ldots\rangle$. Thus, our work can be considered along the lines of \cite{fel,marin,matthews}. By the way, observe that these authors always consider sequences of three numbers and we do not.	
	
	In order to achieve our purpose, we will use the theory of numerical semigroups (see Section~\ref{ns} for several results of this theory), which is closely related with the Frobenius problem. Indeed, the sets $S(a)$ defined above are numerical semigroups. In Section~\ref{msgS(a)} we will determine the minimal finite subsequence of $\{f_a+f_0,f_a+f_1,\ldots,f_a+f_n,\ldots\}$ which generates $S(a)$; in Section~\ref{aperyS(a)} we will explicitly give the Ap\'ery sets related to our numerical semigroups; and in Sections~\ref{frobeniusS(a)} and \ref{genusS(a)} we will give the formulas for solving the extended Frobenius problem. Finally, as a result derived from our study, in Section~\ref{genusS(a)} we will check that our family of numerical semigroups satisfies the Wilf's conjecture (see \cite{wilf}).

	\section{Preliminaries (on numerical semigroups)}\label{ns}
	
	Let $\mathbb{Z}$ be the set of integers and $\mathbb{N} = \{ z\in\mathbb{Z} \mid z\geq 0\}$. A submonoid of $(\mathbb{N},+)$ is a subset $M$ of $\mathbb{N}$ such that is closed under addition and contains de zero element. A \textit{numerical semigroup} is a submonoid of $(\mathbb{N},+)$ such that $\mathbb{N}\setminus S=\{n\in\mathbb{N} \mid n\not\in S\}$ is finite.
	
	Let $S$ be a numerical semigroup. From the finiteness of $\mathbb{N}\setminus S$, we can define two invariants of $S$. Namely, the \textit{Frobenius number of $S$} is the greatest integer that does not belong to $S$, denoted by $\mathrm{F}(S)$, and the \textit{genus of $S$} is the cardinality of $\mathbb{N}\setminus S$, denoted by $\mathrm{g}(S)$.
	
	If $X$ is a non-empty subset of $\mathbb{N}$, then we denote by $\langle X \rangle$ the submonoid of $(\mathbb{N},+)$ generated by $X$, that is,
	\[ \langle X \rangle=\big\{\lambda_1x_1+\cdots+\lambda_nx_n \mid n\in\mathbb{N}\setminus \{0\}, \ x_1,\ldots,x_n\in X, \ \lambda_1,\ldots,\lambda_n\in \mathbb{N}\big\}. \]
	It is well known (see Lemma~2.1 of \cite{springer}) that $\langle X \rangle$ is a numerical semigroup if and only if $\gcd(X)=1$.
	
	If $S$ is a numerical semigroup and $S=\langle X \rangle$, then we say that $X$ is a \textit{system of generators of $S$}. Moreover, if $S\not=\langle Y \rangle$ for any subset $Y\subsetneq X$, then we say that $X$ is a \textit{minimal system of generators of $S$}. In Theorem~2.7 of \cite{springer} it is shown that each numerical semigroup admits a unique minimal system of generators and that such a system is finite. We denote by $\mathrm{msg}(S)$ the minimal system of generators of $S$. The cardinality of $\mathrm{msg}(S)$, denoted by $\mathrm{e}(S)$, is the \textit{embedding dimension of $S$}.
	
	The (extended) Frobenius problem for a numerical semigroup $S$ consists of finding formulas that allow us to compute $\mathrm{F}(S)$ and $\mathrm{g}(S)$ in terms of $\mathrm{msg}(S)$. As in the case of the Frobenius problem for sequences, such formulas are well known for $\mathrm{e}(S)=2$ (see \cite{sylvester}), but it is not possible to find polynomial formulas when $e(S)\geq3$ (see \cite{curtis}), except for particular families of numerical semigroups.
	
	If $n\in S\setminus\{0\}$, then a very useful tool to describe a numerical semigroup $S$ is the set $\mathrm{Ap}(S,n)=\{s\in S \mid s-n\not\in S\}$, called the \textit{Ap\'ery set of $n$ in $S$} (after \cite{apery}). The following result is Lemma~2.4 of \cite{springer}.
	
	\begin{proposition}\label{prop02}
		Let $S$ be a numerical semigroup and $n\in S\setminus\{0\}$. Then the cardinality of $\mathrm{Ap}(S,n)$ is $n$. Moreover,
		\[ \mathrm{Ap}(S,n)=\{w(0)=0, w(1), \ldots, w(n-1)\}, \] 
		where $w(i)$ is the least element of $S$ congruent with $i$ modulo $n$.
	\end{proposition}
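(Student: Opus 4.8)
The plan is to organize the argument around residue classes modulo $n$. For each $i\in\{0,1,\ldots,n-1\}$ I would set $S_i=\{s\in S \mid s\equiv i \pmod n\}$ and take $w(i)$ to be the least element of $S_i$. The first thing to check is that $S_i\neq\emptyset$, so that $w(i)$ is well defined: since $\mathbb{N}\setminus S$ is finite, every sufficiently large integer lies in $S$, and in particular some such integer is congruent to $i$ modulo $n$; hence $S_i$ is a non-empty subset of $\mathbb{N}$ and has a minimum. Note also that $w(0)=0$, because $0\in S$ and there is no smaller non-negative integer.

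Next I would establish the two inclusions defining $\mathrm{Ap}(S,n)=\{w(0),w(1),\ldots,w(n-1)\}$. For the inclusion $\supseteq$: each $w(i)$ lies in $S$ by construction, and $w(i)-n\notin S$, since otherwise $w(i)-n$ would be a non-negative element of $S$ lying in $S_i$ and strictly smaller than $w(i)$ (if $w(i)-n<0$ it fails to be in $S$ trivially), contradicting the minimality of $w(i)$; thus $w(i)\in\mathrm{Ap}(S,n)$. For the inclusion $\subseteq$: given $s\in\mathrm{Ap}(S,n)$, let $i$ be the residue of $s$ modulo $n$, so $s\in S_i$ and therefore $s=w(i)+kn$ for some $k\geq 0$. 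If $k\geq 1$, then $s-n=w(i)+(k-1)n$ is a sum of $w(i)\in S$ and $(k-1)n\in n\mathbb{N}\subseteq S$ (here we use $n\in S$), so $s-n\in S$, contradicting $s\in\mathrm{Ap}(S,n)$; hence $k=0$ and $s=w(i)$.

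Finally, since the elements $w(0),\ldots,w(n-1)$ lie in pairwise distinct residue classes modulo $n$, they are pairwise distinct, and therefore $\lvert\mathrm{Ap}(S,n)\rvert=n$, which completes the proof. The only point requiring any care — and really the only place where the hypothesis that $S$ is a \emph{numerical} semigroup (as opposed to an arbitrary submonoid of $\mathbb{N}$) is used — is the non-emptiness of each $S_i$; the rest is a routine residue-class computation resting on the containment $n\mathbb{N}\subseteq S$.
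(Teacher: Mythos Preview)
Your argument is correct and is in fact the standard proof of this result. Note, however, that the paper does not supply its own proof of this proposition: it is quoted verbatim as Lemma~2.4 of \cite{springer} and used as background, so there is no in-paper argument to compare against. Your residue-class approach is exactly the one found in that reference.
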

	
	The knowledge of $\mathrm{Ap}(S,n)$ allows us to solve the problem of membership of an integer to the numerical semigroup $S$. In fact, if $x\in\mathbb{Z}$, then $x\in S$ if and only if $x\geq w(x\bmod n)$. Moreover, we have the following result from \cite{selmer}.
	
	\begin{proposition}\label{prop03}
		Let $S$ be a numerical semigroup and let $n\in S\setminus\{0\}$. Then
		\begin{enumerate}
			\item $\mathrm{F}(S)=\max(\mathrm{Ap}(S,n))-n$,
			\item $\mathrm{g}(S)=\frac{1}{n}(\sum_{w\in \mathrm{Ap}(S,n)} w)-\frac{n-1}{2}$.
		\end{enumerate}
	\end{proposition}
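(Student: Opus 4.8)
The plan is to derive both formulas directly from the description of the Ap\'ery set in Proposition~\ref{prop02}, together with the membership criterion recalled just before Proposition~\ref{prop03}: for $x\in\mathbb{Z}$ one has $x\in S$ if and only if $x\geq w(x\bmod n)$, where $w(i)$ denotes the least element of $S$ in the residue class $i$ modulo $n$. The idea is to partition $\mathbb{Z}$ (in practice, $\mathbb{N}$) into the $n$ residue classes modulo $n$ and to analyse $S$ one class at a time, since within a fixed class membership in $S$ is controlled by a single threshold.

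For part (1), fix a class $i\in\{0,1,\ldots,n-1\}$. By the membership criterion, the integers in this class that do \emph{not} belong to $S$ are exactly those of the form $i+kn$ with $i+kn<w(i)$, and the largest such integer is $w(i)-n$. Taking the maximum over the classes, the largest integer missing from $S$ is $\max_{0\leq i\leq n-1}\bigl(w(i)-n\bigr)=\max(\mathrm{Ap}(S,n))-n$, which is the asserted formula for $\mathrm{F}(S)$; note that the class $i=0$ contributes only $w(0)-n=-n$, which is irrelevant to the maximum (and is consistent with the convention $\mathrm{F}(\mathbb{N})=-1$ when $n=1$).

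For part (2), with $i$ fixed as above, write $w(i)=i+k_i n$ with $k_i\in\mathbb{N}$; then the nonnegative integers in this class lying outside $S$ are precisely $i,\,i+n,\ldots,i+(k_i-1)n$, so there are exactly $k_i=\frac{w(i)-i}{n}$ of them. These sets are pairwise disjoint and their union is $\mathbb{N}\setminus S$, so summing over all classes and using $\sum_{i=0}^{n-1}i=\frac{n(n-1)}{2}$ yields
\[
\mathrm{g}(S)=\sum_{i=0}^{n-1}\frac{w(i)-i}{n}=\frac{1}{n}\sum_{w\in\mathrm{Ap}(S,n)}w-\frac{n-1}{2}.
\]

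I do not expect a real obstacle here: the proof is essentially a bookkeeping argument once Proposition~\ref{prop02} is in hand. The only points requiring a word of care are that the two enumerations above are exhaustive and mutually disjoint across residue classes (immediate from the defining property of $w(i)$ as the \emph{least} element of $S$ in its class), and that the degenerate case $S=\mathbb{N}$, i.e.\ $n=1$ and $\mathrm{Ap}(S,1)=\{0\}$, is consistent with the stated formulas.
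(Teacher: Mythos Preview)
Your proof is correct and is the standard argument for Selmer's formulas. Note, however, that the paper does not actually prove this proposition: it is stated as a quoted result from \cite{selmer} and no proof is given in the paper itself, so there is nothing to compare against beyond observing that your argument supplies exactly the missing justification.
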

	
	From this proposition, it is clear that, if we have an explicit description of $\mathrm{Ap}(S,n)$, then we have the solution of the Frobenius problem for $S$.

	\section{The minimal system of generators of $S(a)$}\label{msgS(a)}
	
	From the definition of $S(a)$, it is clear that, if $a\in\{0,1,2\}$, then $S(a)=\mathbb{N}$. Therefore, in what follows, and unless otherwise indicated, we will assume that $a\in\mathbb{N}\setminus\{0,1,2\}$.
	
	In this section our main objective will be to determine the minimal system of generators of $S(a)=\langle f_a+f_0, f_a+f_1, \ldots, f_a+f_n \ldots\rangle$.
	
	First of all, let us observe that $\gcd\{f_a+f_0, f_a+f_1\} = \gcd\{f_a, f_a+1\} = 1$ and, therefore, $S(a)$ is a numerical semigroup.
	
	Let us see several results that are necessary to achieve our purpose.
	
	\begin{lemma}{\cite[p. 107]{honsberger}}\label{lem01}
		If $i\in\mathbb{N}$, then $f_{a+i} = f_{i+1}f_a + f_i f_{a-1}$.
	\end{lemma}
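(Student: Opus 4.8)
The plan is a straightforward induction on $i$ (for fixed $a\geq 3$), exploiting the fact that, for each fixed $a$, both sides of the claimed equality obey the Fibonacci recurrence in the parameter $i$.

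First I would settle the two base cases $i=0$ and $i=1$; two are needed because the Fibonacci recurrence has order two. For $i=0$, the right-hand side is $f_1f_a+f_0f_{a-1}=1\cdot f_a+0\cdot f_{a-1}=f_a=f_{a+0}$ (here $f_{a-1}$ makes sense since $a\geq 3$). For $i=1$, the right-hand side is $f_2f_a+f_1f_{a-1}=f_a+f_{a-1}=f_{a+1}$ by the defining recurrence $f_{a+1}=f_a+f_{a-1}$.

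For the inductive step, suppose the identity holds for two consecutive indices $i$ and $i+1$. Adding those two instances and applying $f_{a+i+2}=f_{a+i+1}+f_{a+i}$ on the left, while collecting terms on the right via $f_{i+2}+f_{i+1}=f_{i+3}$ and $f_{i+1}+f_i=f_{i+2}$, gives
\[
f_{a+i+2}=f_{a+i+1}+f_{a+i}=(f_{i+2}f_a+f_{i+1}f_{a-1})+(f_{i+1}f_a+f_if_{a-1})=f_{i+3}f_a+f_{i+2}f_{a-1},
\]
which is precisely the asserted identity for $i+2$. Hence it holds for all $i\in\mathbb{N}$.

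I do not anticipate a genuine obstacle here: the only subtleties are remembering to verify \emph{two} base cases and noting that $f_{a-1}$ is well-defined under the standing hypothesis $a\geq 3$. A quicker but less self-contained alternative would be to invoke the matrix identity $Q^n=\left(\begin{smallmatrix}f_{n+1}&f_n\\f_n&f_{n-1}\end{smallmatrix}\right)$ for $Q=\left(\begin{smallmatrix}1&1\\1&0\end{smallmatrix}\right)$ and read off the relevant entry of $Q^{a+i}=Q^aQ^i$; I would nonetheless present the elementary induction above.
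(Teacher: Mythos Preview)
Your induction argument is correct and complete; the two base cases and the recurrence step are exactly what is needed. Note, however, that the paper does not actually prove this lemma at all: it merely cites \cite[p.~107]{honsberger} and moves on, so there is no in-paper proof to compare against. Your write-up therefore supplies what the paper outsources to the reference, and either the elementary induction you give or the matrix-power alternative you mention would serve that purpose.
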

	
	
	\begin{lemma}\label{lem02}
		If $i\in\mathbb{N}$, then $f_a + f_{a+i} \in\langle f_a+f_0, f_a + f_{a-1} \rangle$.
	\end{lemma}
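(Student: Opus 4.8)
The plan is to reduce everything to an explicit non-negative integer combination of the two generators. Since $f_0 = 0$, the first generator is simply $f_a + f_0 = f_a$, so it suffices to show $f_a + f_{a+i} \in \langle f_a,\, f_a + f_{a-1}\rangle$. The key input is Lemma~\ref{lem01}, which gives $f_{a+i} = f_{i+1}f_a + f_i f_{a-1}$. Substituting this in and collecting terms, I would write
\[
f_a + f_{a+i} = (f_{i+1}+1)\,f_a + f_i\,f_{a-1}.
\]

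Next I would rewrite the right-hand side in terms of $f_a$ and $f_a + f_{a-1}$. Setting $\beta = f_i$ (the coefficient we must attach to $f_a + f_{a-1}$ in order to produce the term $f_i f_{a-1}$) and $\alpha = f_{i+1} + 1 - f_i$, one checks directly that
\[
\alpha\,f_a + \beta\,(f_a + f_{a-1}) = (\alpha+\beta)\,f_a + \beta\,f_{a-1} = (f_{i+1}+1)\,f_a + f_i\,f_{a-1} = f_a + f_{a+i}.
\]
So the whole content of the statement is that $\alpha$ and $\beta$ are non-negative integers.

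That $\beta = f_i \geq 0$ is immediate. For $\alpha = f_{i+1} - f_i + 1$, I would invoke the monotonicity of the Fibonacci sequence: $f_{i+1} \geq f_i$ for every $i \in \mathbb{N}$ (indeed $f_{i+1} - f_i = f_{i-1} \geq 0$ for $i \geq 1$, and $f_1 - f_0 = 1 \geq 0$), hence $\alpha \geq 1$. Therefore $f_a + f_{a+i} = \alpha\,(f_a+f_0) + f_i\,(f_a + f_{a-1})$ with $\alpha, f_i \in \mathbb{N}$, which is exactly membership in $\langle f_a+f_0,\, f_a+f_{a-1}\rangle$.

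I do not expect a genuine obstacle here: the argument is a one-line application of Lemma~\ref{lem01} followed by a coefficient comparison. The only point requiring a word of justification — rather than a difficulty — is the non-negativity of $\alpha$, which is why the proof records the elementary bound $f_{i+1} \geq f_i$. (One may also note the boundary case $i=0$ separately, where $f_a + f_{a+0} = 2f_a = 2(f_a+f_0)$, although it is already covered by the general formula since then $\alpha = 2$, $\beta = 0$.)
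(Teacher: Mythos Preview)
Your proof is correct and essentially identical to the paper's: both apply Lemma~\ref{lem01} to obtain $f_a+f_{a+i}=(f_{i+1}+1)f_a+f_i f_{a-1}$ and then rewrite this as a non-negative combination of $f_a+f_0$ and $f_a+f_{a-1}$, your coefficient $\alpha=f_{i+1}-f_i+1$ being exactly the paper's $f_{i-1}+1$ via the Fibonacci recurrence. If anything, your version is slightly more careful, since writing $\alpha$ as $f_{i+1}-f_i+1$ avoids the undefined $f_{-1}$ at $i=0$ and you explicitly justify $\alpha\geq 0$.
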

	
	\begin{proof}
		By Lemma~\ref{lem01}, we have that $f_a + f_{a+i} = (f_{i+1}+1)f_a + f_i f_{a-1}$. Now, since $f_0=0$, then $f_a + f_{a+i} = (f_{i-1}+1)(f_a+f_0) + f_i (f_a + f_{a-1})$. Consequently, $f_a + f_{a+i} \in\langle f_a+f_0, f_a + f_{a-1} \rangle$.
	\end{proof}
	
	The following result is Lemma~2.3 of \cite{springer}.
	
	\begin{lemma}\label{lem03}
		If $S$ is a numerical semigroup and $S^*=S\setminus\{0\}$, then $\mathrm{msg}(S) = S^* \setminus (S^* + S^*)$.
	\end{lemma}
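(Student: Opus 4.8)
The plan is to prove the two inclusions $\mathrm{msg}(S)\subseteq S^*\setminus(S^*+S^*)$ and $S^*\setminus(S^*+S^*)\subseteq\mathrm{msg}(S)$ separately, using only the fact recalled above (Theorem~2.7 of \cite{springer}) that $S$ admits a minimal system of generators.

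For the inclusion $S^*\setminus(S^*+S^*)\subseteq\mathrm{msg}(S)$ I would argue as follows. Take $x\in S^*$ with $x\notin S^*+S^*$. Since $\mathrm{msg}(S)$ generates $S$, write $x=\lambda_1g_1+\cdots+\lambda_kg_k$ with $g_1,\ldots,g_k\in\mathrm{msg}(S)$ pairwise distinct and $\lambda_1,\ldots,\lambda_k\in\mathbb{N}\setminus\{0\}$. If $\lambda_1+\cdots+\lambda_k\geq2$, then splitting off one copy of a generator expresses $x$ as a sum of two elements of $S^*$, so $x\in S^*+S^*$, a contradiction; hence $\lambda_1+\cdots+\lambda_k=1$, that is, $x=g_1\in\mathrm{msg}(S)$. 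This direction is routine.

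For the inclusion $\mathrm{msg}(S)\subseteq S^*\setminus(S^*+S^*)$, let $x\in\mathrm{msg}(S)$; certainly $x\in S^*$, since $0$ is redundant in any generating set. Suppose, for contradiction, that $x=y+z$ with $y,z\in S^*$, so that $0<y<x$ and $0<z<x$. The key observation is that every $s\in S$ with $s<x$ can be written as an $\mathbb{N}$-combination of $\mathrm{msg}(S)$ in which $x$ does not appear, because a single copy of $x$ already contributes $x>s$. Applying this to $y$ and to $z$ shows that $x=y+z$ belongs to the submonoid generated by $\mathrm{msg}(S)\setminus\{x\}$; consequently $\mathrm{msg}(S)\setminus\{x\}$ still generates $S$, contradicting minimality. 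I expect this last step to be the only delicate point: one must check carefully that discarding $x$ loses no element of $S$, which is exactly what the observation above secures.

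Putting the two inclusions together yields $\mathrm{msg}(S)=S^*\setminus(S^*+S^*)$. An alternative, more self-contained route would be to show first, by strong induction on $s\in S$, that $A:=S^*\setminus(S^*+S^*)$ generates $S$ (if $s\in S^*\setminus A$, then $s=y+z$ with $y,z\in S^*$ strictly smaller, and the inductive hypothesis applies to $y$ and $z$), then to show that $A$ is a minimal generating set by the argument of the previous paragraph, and finally to invoke the uniqueness of the minimal system of generators; but the two-inclusion argument above is shorter and avoids appealing to uniqueness.
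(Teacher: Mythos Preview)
Your proof is correct and follows the standard argument. Note, however, that the paper does not give its own proof of this lemma: it simply cites it as Lemma~2.3 of \cite{springer}. Your two-inclusion argument is essentially the one found there (and in most introductions to numerical semigroups), so there is nothing to compare on the level of strategy; you have supplied the details that the paper chose to omit by citation.
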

	
	If $S$ is a numerical semigroup, then the \textit{multiplicity of $S$} is the least positive integer belonging ot $S$, denoted by $\mathrm{m}(S)$.
	
	The next lemma is an immediate consequence of Lemma~\ref{lem03}.
	
	\begin{lemma}\label{lem04}
		If $X$ is a system of generators of a numerical semigroup $S$ and $X\subseteq \{\mathrm{m}(S), \mathrm{m}(S)+1, \ldots, 2\mathrm{m}(S)-1 \}$, then $X=\mathrm{msg}(S)$.
	\end{lemma}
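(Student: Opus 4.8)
The plan is to deduce this from Lemma~\ref{lem03} together with the elementary fact that no element of the interval $\{\mathrm{m}(S),\ldots,2\mathrm{m}(S)-1\}$ can be written as a sum of two nonzero elements of $S$. First I would set $\mathrm{m} = \mathrm{m}(S)$ and recall that every nonzero element of $S$ is at least $\mathrm{m}$; hence if $s = s_1 + s_2$ with $s_1, s_2 \in S^* = S\setminus\{0\}$, then $s \ge 2\mathrm{m}$. Equivalently, $\{\mathrm{m}, \mathrm{m}+1, \ldots, 2\mathrm{m}-1\} \cap (S^* + S^*) = \emptyset$, so all of these elements, in particular those of $X$, lie in $S^* \setminus (S^* + S^*) = \mathrm{msg}(S)$ by Lemma~\ref{lem03}. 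This gives the inclusion $X \subseteq \mathrm{msg}(S)$.

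For the reverse inclusion, I would use that $X$ is a system of generators, so $\langle X \rangle = S = \langle \mathrm{msg}(S)\rangle$, combined with the minimality of $\mathrm{msg}(S)$. Concretely, $\mathrm{msg}(S) \subseteq \langle X\rangle$, so each element $g \in \mathrm{msg}(S)$ is an $\mathbb{N}$-linear combination of elements of $X \subseteq \mathrm{msg}(S)$; since $g$ is a minimal generator, this combination must be trivial, i.e. $g \in X$. Alternatively, and more cleanly, one invokes the uniqueness of the minimal system of generators (Theorem~2.7 of \cite{springer}): since $X \subseteq \mathrm{msg}(S)$ already generates $S$, and $\mathrm{msg}(S)$ is the \emph{unique} minimal system of generators, any proper subset of $\mathrm{msg}(S)$ fails to generate $S$, forcing $X = \mathrm{msg}(S)$.

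I do not anticipate a genuine obstacle here; the statement is essentially a one-line corollary once one observes the key inequality $s_1 + s_2 \ge 2\mathrm{m}$ for $s_1, s_2 \in S^*$. The only point requiring a little care is making sure that $X$ being a \emph{system} of generators (not a priori minimal) is correctly used for the $\supseteq$ direction; this is handled by the uniqueness of $\mathrm{msg}(S)$. So the write-up would be: (1) show $X \cap (S^*+S^*) = \emptyset$ via the multiplicity bound; (2) apply Lemma~\ref{lem03} to conclude $X \subseteq \mathrm{msg}(S)$; (3) invoke minimality/uniqueness of $\mathrm{msg}(S)$ to upgrade the inclusion to equality.
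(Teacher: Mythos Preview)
Your proposal is correct and follows exactly the approach the paper intends: the paper merely states that the lemma is an immediate consequence of Lemma~\ref{lem03}, and your argument---observing that no element below $2\mathrm{m}(S)$ lies in $S^*+S^*$, hence $X\subseteq\mathrm{msg}(S)$, and then invoking minimality/uniqueness of $\mathrm{msg}(S)$---is precisely the intended unpacking of that remark.
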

	
	We are now ready to show the announced result on the minimal system of generators of $S(a)$.
	
	\begin{proposition}\label{prop05}
		We have that $\mathrm{msg}(S(a)) = \{ f_a+f_0, f_a+f_2, \ldots, f_a+f_{a-1}\}$.
	\end{proposition}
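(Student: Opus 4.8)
The plan is to invoke Lemma~\ref{lem04} with the candidate generating set $X = \{f_a+f_0, f_a+f_2, f_a+f_3, \ldots, f_a+f_{a-1}\}$, so the work splits into three parts: showing that $X$ generates $S(a)$, identifying the multiplicity $\mathrm{m}(S(a))$, and checking that $X$ sits inside the window $\{\mathrm{m}(S(a)), \ldots, 2\mathrm{m}(S(a))-1\}$.

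First I would prove $\langle X\rangle = S(a)$. The inclusion $\langle X\rangle\subseteq S(a)$ is immediate since $X\subseteq S(a)$. For the converse it is enough to check that every defining generator $f_a+f_n$, $n\in\mathbb{N}$, lies in $\langle X\rangle$. If $n\le a-1$ this is clear: for $n\ne 1$ we have $f_a+f_n\in X$ outright, and for $n=1$ we use $f_1=f_2$ so that $f_a+f_1=f_a+f_2\in X$. If $n\ge a$, write $n=a+i$ with $i\ge 0$; Lemma~\ref{lem02} gives $f_a+f_{a+i}\in\langle f_a+f_0, f_a+f_{a-1}\rangle\subseteq\langle X\rangle$. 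Hence $S(a)=\langle X\rangle$.

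Next, since every element of $X$ is at least $f_a+f_0=f_a$, every nonzero element of $S(a)=\langle X\rangle$ is $\ge f_a$; as $f_a\in S(a)$, this yields $\mathrm{m}(S(a))=f_a$. Finally, for $a\ge 3$ we have $f_a-f_{a-1}=f_{a-2}\ge f_1=1$, so $f_{a-1}\le f_a-1$; therefore every element $f_a+f_j$ of $X$ (with $0\le j\le a-1$) satisfies $f_a\le f_a+f_j\le f_a+f_{a-1}\le 2f_a-1$, i.e. $X\subseteq\{\mathrm{m}(S(a)),\mathrm{m}(S(a))+1,\ldots,2\mathrm{m}(S(a))-1\}$. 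Lemma~\ref{lem04} then gives $X=\mathrm{msg}(S(a))$, as claimed.

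The only step with any content is the first one, and even there the essential point — that the ``tail'' generators $f_a+f_n$ with $n\ge a$ are superfluous — is precisely Lemma~\ref{lem02}; the remaining ingredients are the trivial Fibonacci facts $f_1=f_2$ and $f_{a-1}<f_a$. So I do not anticipate a genuine obstacle, only the bookkeeping around the index $n=1$, where two of the defining generators coincide.
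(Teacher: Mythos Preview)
Your proof is correct and follows the same route as the paper: use Lemma~\ref{lem02} to see that the tail generators are redundant so that $X$ generates $S(a)$, note $\mathrm{m}(S(a))=f_a$, check $X\subseteq\{f_a,\ldots,2f_a-1\}$ via $f_{a-1}<f_a$, and conclude with Lemma~\ref{lem04}. The only difference is that you spell out the bookkeeping (the case $n=1$ and the inequality $f_{a-1}\le f_a-1$) that the paper leaves implicit.
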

	
	\begin{proof}
		By Lemma~\ref{lem02}, we deduce that $\{ f_a+f_0, f_a+f_2, \ldots, f_a+f_{a-1}\}$ is a system of generators of $S(a)$. Since $\mathrm{m}(S(a)) = f_a+f_0 = f_a$ and $f_a = f_a+f_0 < f_a+f_2 < \ldots  < f_a+f_{a-1} < 2f_a$, by applying Lemma~\ref{lem04}, we conclude the proof. 
	\end{proof}
	
	As an immediate consequence of the previous proposition, we have the following result.
	
	\begin{corollary}\label{cor06}
		The embedding dimension of $S(a)$ is $\mathrm{e}(S(a))=a-1$.
	\end{corollary}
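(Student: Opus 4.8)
The plan is simply to count the generators produced by Proposition~\ref{prop05}. By definition, $\mathrm{e}(S(a))$ is the cardinality of $\mathrm{msg}(S(a))$, and Proposition~\ref{prop05} tells us that
\[ \mathrm{msg}(S(a)) = \{ f_a+f_0, f_a+f_2, f_a+f_3, \ldots, f_a+f_{a-1} \}. \]
So the work reduces to determining how many elements this set has.

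First I would identify the index set appearing on the right-hand side: it is $\{0\}\cup\{2,3,\ldots,a-1\}$, obtained from $\{0,1,\ldots,a-1\}$ by deleting the index $1$ (this omission is legitimate because $f_1=f_2=1$, so $f_a+f_1=f_a+f_2$ would be a repetition). This index set has $1+(a-2)=a-1$ elements. Next I would check that the corresponding generators $f_a+f_n$ are pairwise distinct, equivalently that $n\mapsto f_a+f_n$ is injective on this index set: this is immediate since $f_0=0<1=f_2<f_3<\cdots<f_{a-1}$, the Fibonacci numbers being strictly increasing from index $2$ onward (recall we assume $a\geq 3$, so the list is nonempty and well defined). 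Hence $|\mathrm{msg}(S(a))|=a-1$, which gives $\mathrm{e}(S(a))=a-1$.

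There is no real obstacle here: the statement is a direct corollary of Proposition~\ref{prop05}, and the only point requiring a word of justification is the count of the index set $\{0\}\cup\{2,\ldots,a-1\}$ together with the strict monotonicity of the Fibonacci sequence that guarantees the listed generators are distinct.
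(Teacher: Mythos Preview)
Your proposal is correct and matches the paper's approach: the corollary is stated there as an immediate consequence of Proposition~\ref{prop05}, and the only thing to do is count the indices in $\{0\}\cup\{2,\ldots,a-1\}$, the distinctness of the corresponding generators already having been used in the proof of Proposition~\ref{prop05} via the chain $f_a+f_0<f_a+f_2<\cdots<f_a+f_{a-1}$.
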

	
	\begin{example}\label{exmp07}
		By definition, $S(7) = \langle 13+0, 13+1, 13+2, 13+3, 13+5, 13+8, 13+13, 13+21, 13+34, \ldots \rangle$. By Proposition~\ref{prop05}, we know that $\mathrm{msg}(S(7)) = \{13,14,15,16,18,21\}$ and, therefore, $\mathrm{e}(S(7))=6$.
	\end{example}
	
	It is clear that $\{ f_n \mid n\geq a \} \subseteq \langle f_a, f_{a+1} \rangle$ and that $\{ f_a, f_{a+1} \} \subseteq S(a)$. Therefore, we have the next result.
	
	\begin{proposition}\label{prop08}
		We have that $\{ f_n \mid n\geq a \} \subseteq S(a)$.
	\end{proposition}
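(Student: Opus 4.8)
The plan is to exploit the two observations highlighted just before the statement: that every Fibonacci number of index at least $a$ lies in $\langle f_a, f_{a+1}\rangle$, and that both $f_a$ and $f_{a+1}$ belong to $S(a)$. Since $S(a)$ is a submonoid of $(\mathbb{N},+)$, from $\{f_a, f_{a+1}\}\subseteq S(a)$ we immediately get $\langle f_a,f_{a+1}\rangle\subseteq S(a)$, and then the chain $\{f_n\mid n\geq a\}\subseteq\langle f_a,f_{a+1}\rangle\subseteq S(a)$ finishes the argument. So the work splits into checking the two endpoints of this chain.

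First I would verify that $f_a\in S(a)$ and $f_{a+1}\in S(a)$. The membership $f_a\in S(a)$ is immediate, since $f_a=f_a+f_0$ is (the smallest element of) $\mathrm{msg}(S(a))$ by Proposition~\ref{prop05}. For $f_{a+1}$, I would use the Fibonacci recurrence to write $f_{a+1}=f_a+f_{a-1}$; because $a\geq 3$ by the standing assumption, the integer $f_a+f_{a-1}$ is precisely the largest generator listed in Proposition~\ref{prop05}, hence $f_{a+1}\in S(a)$. Next I would prove $\{f_n\mid n\geq a\}\subseteq\langle f_a,f_{a+1}\rangle$ by a short induction on $n$: the cases $n=a$ and $n=a+1$ are trivial, and for $n\geq a+2$ the induction hypothesis gives $f_{n-1},f_{n-2}\in\langle f_a,f_{a+1}\rangle$, so $f_n=f_{n-1}+f_{n-2}\in\langle f_a,f_{a+1}\rangle$ by closure under addition.

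I do not expect a genuine obstacle here; this is essentially a bookkeeping argument. The only points that require a modicum of care are making sure the indices behave well — in particular that $f_{a-1}$ really is one of the generators produced by Proposition~\ref{prop05}, which is exactly what the hypothesis $a\in\mathbb{N}\setminus\{0,1,2\}$ guarantees — and setting up the two inductions with the correct base cases.
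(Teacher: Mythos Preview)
Your proposal is correct and follows exactly the approach the paper takes: the paper simply records the two observations $\{f_n\mid n\geq a\}\subseteq\langle f_a,f_{a+1}\rangle$ and $\{f_a,f_{a+1}\}\subseteq S(a)$ in the sentence preceding the proposition and leaves the proposition unproved, while you spell out the easy verifications of both inclusions. There is nothing to add.
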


	\section{The Ap\'ery set of $S(a)$}\label{aperyS(a)}
	
	Our main objective in this section is to prove Theorem~\ref{thm12}, which describes $\mathrm{Ap}(S(a), f_a)$.
	
	It is a well known fact that every non-negative integer can be uniquely represented as a sum of non-consecutive Fibonacci numbers (see \cite{zeckendorf}), the so-called \textit{Zeckendorf decomposition}. Moreover, this decomposition is minimal in the sense that no other decomposition has fewer summands (see \cite{decomposition}). Both facts are summarised in the following lemma.
	
	\begin{lemma}\label{lem00}
		If $x\in\mathbb{N}\setminus\{0\}$, then there exists a unique $k\in\mathbb{N}\setminus\{0,1\}$ such that $x=\sum_{i=2}^k b_i f_i$ with $(b_2,\ldots,b_k)\in\{0,1\}^{k-1}$, $b_k=1$, and $b_ib_{i+1}=0$ for all $i\in\{2,\ldots,k-1\}$. Moreover, if $x=\sum_{i=2}^{k'} c_i f_i$ with $(c_2,\ldots,c_{k'})\in\mathbb{N}^{k'-1}$ and $k'\in\mathbb{N}$, then $\sum_{i=2}^k b_i \leq \sum_{i=2}^{k'} c_i$.
	\end{lemma}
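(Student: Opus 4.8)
The plan is to treat the two assertions of the lemma separately: first the existence and uniqueness of the Zeckendorf expansion, then the minimality of its length. For existence and uniqueness the key auxiliary fact I would establish is that a sum of pairwise non-consecutive Fibonacci numbers whose largest index is $m$ lies strictly below $f_{m+1}$; indeed the largest such sum is $f_m+f_{m-2}+f_{m-4}+\cdots$, which telescopes by the standard Fibonacci summation identities to $f_{m+1}-1$. Granting this, any legal expansion $x=\sum_{i=2}^{k}b_if_i$ with $b_k=1$ satisfies $f_k\le x<f_{k+1}$, so $k$ is forced to be the unique index with $f_k\le x<f_{k+1}$. Existence then follows by strong induction on $x$: pick that $k$, note $0\le x-f_k<f_{k+1}-f_k=f_{k-1}$, apply the inductive hypothesis to $x-f_k$ (whose leading index is automatically $\le k-2$), and prepend $f_k$; since the index gap is at least $2$, non-consecutiveness is preserved. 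Uniqueness is the same induction run backwards: two legal expansions of $x$ must both have leading index $k$, so deleting $f_k$ yields two legal expansions of the smaller number $x-f_k$, which coincide by induction.

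For minimality the plan is a rewriting argument. Given an arbitrary expansion $x=\sum_{i=2}^{k'}c_if_i$ with $c_i\in\mathbb{N}$, I would repeatedly apply the length-non-increasing rewrite rules $f_i+f_{i+1}\mapsto f_{i+2}$ (which shortens the expansion by one summand) and $2f_i\mapsto f_{i+1}+f_{i-2}$ (which preserves its length and ``spreads it out''), supplemented by the boundary moves $2f_2\mapsto f_3$ and a separate handling of $2f_3$. Every such step leaves the represented value $x$ unchanged and never increases $\sum c_i$. A terminal expansion has all coefficients in $\{0,1\}$ and no two consecutive indices, i.e. it is a legal Zeckendorf expansion, hence by the uniqueness already proved it equals \emph{the} Zeckendorf expansion $\sum_{i=2}^k b_i f_i$; since $\sum c_i$ only ever decreased along the way, we get $\sum_{i=2}^k b_i\le\sum_{i=2}^{k'}c_i$, which is exactly the claimed inequality.

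The delicate point — and the part I expect to cost the most work — is showing that this rewriting terminates. Lengths cannot decrease forever, so eventually only the length-preserving ``spreading'' rule is used, and one needs a secondary monovariant to see that it too halts; a convenient choice is $\sum_i c_i r^i$ for a fixed $r$ with $1<r<\phi$, for which one checks $r^{i+1}+r^{i-2}<2r^i$ and $r^{i+2}<r^i+r^{i+1}$, so that \emph{every} rule strictly decreases it, and since for a fixed $x$ only finitely many expansions can occur (all indices being bounded by the largest index $m$ with $f_m\le x$), strict decrease forces termination. The genuine nuisance is the small indices $i\in\{2,3\}$, where $i-2<2$ and the coincidence $f_1=f_2$ would interfere with $2f_i\mapsto f_{i+1}+f_{i-2}$; there one argues by hand on the finitely many configurations (for instance $2f_3=f_4+f_2$ is already non-consecutive, $3f_3=f_5+f_2$, and $2f_2=f_3$). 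Alternatively one may bypass the rewriting and prove minimality directly by strong induction on $x$: with $f_k\le x<f_{k+1}$ every index present satisfies $i\le k$; if $c_k\ge1$ one subtracts $f_k$ and invokes the inductive hypothesis on $x-f_k$, while the case $c_k=0$ is reduced to it by the same merging moves. In any case, since the statement is classical, one can also simply invoke \cite{zeckendorf,decomposition}.
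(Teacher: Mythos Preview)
The paper does not prove this lemma at all: it introduces the statement as a well-known fact, citing \cite{zeckendorf} for existence/uniqueness and \cite{decomposition} for minimality, and then simply records both in Lemma~\ref{lem00} without argument. Your final sentence (``since the statement is classical, one can also simply invoke \cite{zeckendorf,decomposition}'') is therefore exactly what the paper does.

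That said, the proof you sketch is correct and is one of the standard routes. The bound $f_m+f_{m-2}+\cdots=f_{m+1}-1$ pins down the leading index and makes the strong induction for existence/uniqueness go through cleanly. For minimality, your rewriting system is sound: $f_i+f_{i+1}\to f_{i+2}$ strictly shortens, $2f_i\to f_{i+1}+f_{i-2}$ is length-preserving (valid for $i\ge4$, with the hand cases $2f_2=f_3$ and $2f_3=f_4+f_2$ covering the boundary), and your monovariant $\sum_i c_i r^i$ with $1<r<\phi$ works because $r^3-2r^2+1=(r-1)(r^2-r-1)<0$ and $r^2-r-1<0$ on that interval; combined with the observation that any index appearing with positive coefficient satisfies $f_i\le x$, hence the index set is bounded, and $\sum c_i$ never increases, there are only finitely many configurations and termination follows. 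The alternative direct induction you mention (peel off $f_k$ if $c_k\ge1$, otherwise merge up) is the other standard proof and is arguably tidier. Either way you are supplying strictly more than the paper, which is content to quote the literature.
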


	If $x\in\mathbb{N}$, then we denote by 
	\[ \beta(x) = \min\left\{ \sum_{i=2}^{l} b_i \;\bigg\vert\; x=\sum_{i=2}^{l} b_i f_i, \mbox{ with } (b_2,\ldots,b_l)\in\mathbb{N}^{l-1}, \; l\geq2 \right\}. \]
	
	\begin{remark}\label{rem00}
		By Lemma~\ref{lem00}, it is clear that, if $x=\sum_{i=2}^k b_i f_i$ is the Zeckendorf decomposition of $x\in\mathbb{N}\setminus\{0\}$, then $\beta(x)=\sum_{i=2}^k b_i$. Moreover, $\beta(0)=0$.
	\end{remark}
	
	To prove Theorem~\ref{thm12} we need the following result.
	
	\begin{lemma}\label{lem09}
		 If $a\in\mathbb{N}\setminus\{0,1,2\}$, $(d_2,\ldots,d_{a-1})\in\mathbb{N}^{a-2}$, and $\sum_{i=2}^{a-1} d_i f_i \geq f_a$, then there exists $(c_2,\ldots,c_{a-1})\in\mathbb{N}^{a-2}$ such that $\sum_{i=2}^{a-1} d_i f_i = f_a + \sum_{i=2}^{a-1} c_i f_i$ and $\sum_{i=2}^{a-1} c_i < \sum_{i=2}^{a-1} d_i$.
	\end{lemma}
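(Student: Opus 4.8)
The plan is to dispose of $a=3$ by hand ($x=d_2\ge f_3=2$, so $x-f_a=(d_2-2)f_2$ with $d_2-2<d_2$) and then, for $a\ge4$, to argue by induction on the positive integer $P=\prod_{i=2}^{a-1} i^{d_i}$. The mechanism is that the Fibonacci identities $f_{j+2}=f_{j+1}+f_j$, $2f_2=f_3$, $2f_j=f_{j+1}+f_{j-2}$ ($j\ge4$), and $2f_3=f_4+f_2$ each turn a representation $(d_2,\dots,d_{a-1})$ of a fixed integer $x$ into another representation, still supported on $\{f_2,\dots,f_{a-1}\}$, with strictly smaller $P$; while $f_a=f_{a-1}+f_{a-2}=2f_{a-1}-f_{a-3}$ is the identity that detaches a copy of $f_a$.

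In the inductive step, with $x=\sum_{i=2}^{a-1}d_if_i\ge f_a$, I would introduce the moves \textbf{(R1)} if $d_j\ge1$ and $d_{j+1}\ge1$ for some $2\le j\le a-3$, replace $(d_j,d_{j+1},d_{j+2})$ by $(d_j-1,d_{j+1}-1,d_{j+2}+1)$; \textbf{(R2)} if $d_2\ge2$, replace $(d_2,d_3)$ by $(d_2-2,d_3+1)$; \textbf{(R3)} if $d_j\ge2$ for some $3\le j\le a-2$, replace $(d_{j-2},d_j,d_{j+1})$ by $(d_{j-2}+1,d_j-2,d_{j+1}+1)$, where for $j=3$ this means $(d_2,d_3,d_4)\mapsto(d_2+1,d_3-2,d_4+1)$. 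A one-line check will show that each move fixes $x$, keeps the support inside $\{f_2,\dots,f_{a-1}\}$, multiplies $P$ by a rational strictly less than $1$ (respectively $\tfrac{j+2}{j(j+1)}$, $\tfrac34$, $\tfrac{(j+1)(j-2)}{j^2}$, $\tfrac89$), and changes $\sum_i d_i$ by $-1$, $-1$, or $0$. So if some move applies, I apply it to obtain $(d'_i)$ with $P(d')<P(d)$ and $\sum_i d'_i\le\sum_i d_i$, and the induction hypothesis supplies $(c_i)$ with $\sum_i c_if_i=x-f_a$ and $\sum_i c_i<\sum_i d'_i\le\sum_i d_i$, as required.

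If no move applies, then $d_i\le1$ for $2\le i\le a-2$ and no two consecutive indices of $\{2,\dots,a-2\}$ both carry a positive coefficient. When $d_{a-2}\ge1$ and $d_{a-1}\ge1$, the identity $f_a=f_{a-2}+f_{a-1}$ yields $x-f_a=(d_{a-2}-1)f_{a-2}+(d_{a-1}-1)f_{a-1}+\sum_{i\ne a-2,a-1}d_if_i$, of coefficient sum $\sum_i d_i-2$. Otherwise $d_{a-2}=0$ or $d_{a-1}=0$; if moreover $d_{a-1}\le1$ then $(d_2,\dots,d_{a-1})$ is a $0/1$ sequence with no two consecutive $1$'s, hence the Zeckendorf decomposition of $x$, but the largest integer with such a decomposition supported on $\{f_2,\dots,f_{a-1}\}$ is $f_a-1$ (using $\sum_{i=1}^{n}f_i=f_{n+2}-1$), contradicting $x\ge f_a$. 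Hence $d_{a-1}\ge2$ and then $d_{a-2}=0$, and here $f_a=2f_{a-1}-f_{a-3}$ gives
\[
x-f_a=(d_{a-1}-2)f_{a-1}+(d_{a-3}+1)f_{a-3}+\sum_{i=2}^{a-4}d_if_i ,
\]
with $f_{a-3}$ read as $f_2$ when $a=4$ and the displayed sum empty when $a\le5$; this has coefficient sum $\sum_i d_i-1<\sum_i d_i$, which completes the induction.

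The assertions about the moves, the identity $f_a=2f_{a-1}-f_{a-3}$, and the maximal-value estimate are all routine Fibonacci arithmetic. The genuinely delicate point, and where I expect the real work to be, is the terminal step: one has to notice that the failure of every reducing move forces the representation into an almost-Zeckendorf shape, so that $x\ge f_a$ compels a coefficient $\ge2$ at the top index $a-1$, and that this is precisely the configuration from which $f_a=2f_{a-1}-f_{a-3}$ removes a copy of $f_a$ at the cost of a single unit of coefficient sum. Everything else is bookkeeping around the induction on $P$.
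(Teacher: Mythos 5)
Your argument is correct, but it follows a genuinely different route from the paper's. The paper proves this lemma in two stages: a direct computation (its Lemma~\ref{lem10}) handling the configurations ``$d_{a-2}\ge1$ and $d_{a-1}\ge1$'' and ``$d_{a-2}=0$, $d_{a-1}\ge2$'' via the identities $f_a=f_{a-2}+f_{a-1}=2f_{a-1}-f_{a-3}$, and then an induction on $a$ (its Lemma~\ref{lem11}) that disposes of the remaining three top-coefficient configurations by peeling off $f_{a-1}$ and/or $f_{a-2}$ and invoking the statement for $a-1$ and $a-2$. You instead keep $a$ fixed and run a termination argument on the representation itself: the rewriting moves R1--R3 preserve the value, never increase the coefficient sum, and strictly decrease the monovariant $P=\prod_i i^{d_i}$, so every representation normalizes to an almost-Zeckendorf form; Zeckendorf maximality (the largest non-consecutive $0/1$ sum supported on $f_2,\dots,f_{a-1}$ is $f_a-1$) then forces one of exactly the two configurations of the paper's Lemma~\ref{lem10}, which you resolve with the same two identities. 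The paper's induction on $a$ is shorter and needs fewer verifications; your version is self-contained at fixed $a$ and makes the structural reason visible (only near-Zeckendorf representations resist reduction, and those are too small to reach $f_a$), at the cost of checking the moves and the monovariant. One small imprecision: the identity you cite, $\sum_{i=1}^{n}f_i=f_{n+2}-1$, does not by itself give the bound $f_a-1$; what you need is the non-consecutive (alternating) sum identity $f_{a-1}+f_{a-3}+\cdots=f_a-1$, or equivalently the completeness part of Zeckendorf's theorem --- a routine fix, as you anticipated, but the right statement should be invoked.
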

	
	We will show the proof of the above lemma in two steps. In the first (Lemma~\ref{lem10}) we obtain the result directly for some cases. In the second (Lemma~\ref{lem11}) we prove it by mathematical induction for the rest of the cases.
	
	\begin{lemma}\label{lem10}
		Let $a\in\mathbb{N}\setminus\{0,1,2,3\}$ and $\sum_{i=2}^{a-1} b_i f_i \geq f_a$, with $(b_2,\ldots,b_{a-1})\in\mathbb{N}^{a-2}$. If ($b_{a-2}\geq 1$ and $b_{a-1}\geq 1$) or ($b_{a-2}=0$ and $b_{a-1}\geq 2$), then we have that $\sum_{i=2}^{a-1} b_i f_i - f_a = \sum_{i=2}^{a-1} c_i f_i$, with $(c_2,\ldots,c_{a-1})\in\mathbb{N}^{a-2}$ and $\sum_{i=2}^{a-1} c_i < \sum_{i=2}^{a-1} b_i$.
	\end{lemma}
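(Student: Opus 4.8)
The plan is to subtract $f_a$ from the sum $\sum_{i=2}^{a-1} b_i f_i$ by "borrowing" from the top coefficients, using the Fibonacci recurrence to keep everything with indices at most $a-1$ and with nonnegative integer coefficients, while strictly decreasing the total coefficient sum. The two hypotheses on $(b_{a-2},b_{a-1})$ are exactly the two situations in which there is enough "mass" near the top to absorb an $f_a$.

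First I would treat the case $b_{a-2}\geq 1$ and $b_{a-1}\geq 1$. Here I use $f_a = f_{a-1}+f_{a-2}$, so that
\[
\sum_{i=2}^{a-1} b_i f_i - f_a = \sum_{i=2}^{a-3} b_i f_i + (b_{a-2}-1)f_{a-2} + (b_{a-1}-1)f_{a-1}.
\]
Setting $c_i = b_i$ for $2\le i\le a-3$, $c_{a-2}=b_{a-2}-1$, $c_{a-1}=b_{a-1}-1$ gives a representation in $\mathbb{N}^{a-2}$ with $\sum c_i = \sum b_i - 2 < \sum b_i$, as desired.

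Second I would treat the case $b_{a-2}=0$ and $b_{a-1}\geq 2$. Now I use $f_a = f_{a-1}+f_{a-2}$ and then expand one more $f_{a-1}=f_{a-2}+f_{a-3}$ to dispose of the leftover $f_{a-2}$: concretely, $f_a = 2f_{a-2}+f_{a-3}$, so
\[
\sum_{i=2}^{a-1} b_i f_i - f_a = \sum_{i=2}^{a-4} b_i f_i + (b_{a-3}+? )\,f_{a-3} + \ldots
\]
and here I would rather argue directly: since $b_{a-1}\ge 2$, write $f_a = f_{a-1}+f_{a-2} = f_{a-1}+f_{a-3}+f_{a-4}$ (valid as $a\ge 5$ so $a-4\ge 1$), giving
\[
\sum_{i=2}^{a-1} b_i f_i - f_a = (b_{a-1}-1)f_{a-1} + b_{a-2}f_{a-2} + (b_{a-3}-1+1)\cdots
\]
I will need to be a little careful: $b_{a-2}=0$ is what makes $f_a=f_{a-1}+f_{a-2}$ not immediately usable with a coefficient drop of $2$, so instead I peel off $f_a=f_{a-1}+f_{a-2}$, leaving coefficient $-1$ in slot $a-2$, and then borrow $f_{a-2}=f_{a-3}+f_{a-4}$ from... but there is nothing there either. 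The clean route is: $b_{a-1}\ge 2$ means $b_{a-1}f_{a-1} - f_{a-1} = (b_{a-1}-1)f_{a-1}$ and $f_{a-1}-f_{a-2}=f_{a-3}$, but we have no $f_{a-2}$ to cancel; so use $f_a = f_{a-1}+f_{a-2}$ with the $f_{a-2}$ term further rewritten downward as $f_{a-2}=f_{a-3}+f_{a-4}$, then $f_{a-4}$... This telescoping terminates because $\sum_{j} f_j$ over a contiguous range is itself a Fibonacci number, and $f_{a-2} = f_{a-1}-1$-ish bounds keep the running coefficient sum controlled. I expect this second case to be the main obstacle, and the cleanest way to close it is to observe $f_a \le b_{a-1}f_{a-1}$ is false in general, so instead I would reduce to Lemma~\ref{lem00}: write $f_a$ itself via its (one-term) Zeckendorf form and compare coefficient sums, or simply note $f_{a-2}+f_{a-4}+f_{a-6}+\cdots = f_{a-1}-1 < f_{a-1}$, so that $f_a = f_{a-1} + (f_{a-3}+f_{a-5}+\cdots)$ uses indices $\le a-1$, at most $\lceil (a-1)/2\rceil$ terms, each coefficient $1$; subtracting termwise from $\sum b_i f_i$ (possible since $b_{a-1}\ge 2\ge 1$ covers the $f_{a-1}$ term, and the remaining $f_{a-3},f_{a-5},\dots$ terms can be covered by first splitting the spare $f_{a-1}$: $b_{a-1}f_{a-1} - f_{a-1} - f_{a-1} = (b_{a-1}-2)f_{a-1}$ and $f_{a-1}=f_{a-2}+f_{a-3}=\cdots$) yields a nonnegative representation. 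The coefficient-sum bookkeeping then shows the net change is negative because we remove one $f_a$ (total "value" drop $f_a$) while the Fibonacci identities used are coefficient-sum-nonincreasing except for the initial split of $f_a$, which only adds terms on the subtracted side — i.e. it removes from $\sum b_i$. I would verify the final inequality $\sum c_i < \sum b_i$ by tracking that every application of $f_{k}=f_{k-1}+f_{k-2}$ to a coefficient we are \emph{removing} increases the removal count, hence decreases $\sum c_i$ further, so it suffices that the very first step strictly decreases it, which it does since $f_a$ has $\beta(f_a)=1>0$. Throughout, the constraint $a\ge 5$ (i.e. $a\notin\{0,1,2,3\}$) is used precisely to guarantee indices like $a-4$ remain $\ge 1$ so that all $f_i$ appearing are genuine (with $f_1=1$), and I would flag the boundary value $a=4$ as needing the direct check that falls out of the first case.
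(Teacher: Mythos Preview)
Your Case~1 is correct and is exactly the paper's argument: use $f_a=f_{a-1}+f_{a-2}$, decrement $b_{a-1}$ and $b_{a-2}$ by $1$, and the coefficient sum drops by $2$.

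Case~2 is where the proposal breaks down. You never commit to a single representation $(c_2,\ldots,c_{a-1})$; instead you cycle through several identities ($f_a=2f_{a-2}+f_{a-3}$, then $f_a=f_{a-1}+f_{a-3}+f_{a-4}$, then a telescoping $f_a=f_{a-1}+f_{a-3}+f_{a-5}+\cdots$), each time noticing that some coefficient you want to decrement might be zero, and then retreating. None of these threads is carried to a valid nonnegative $(c_i)$ with a verified coefficient-sum drop. The appeal to ``Zeckendorf of $f_a$ has $\beta(f_a)=1$'' does not do the work either: you are not subtracting a Zeckendorf decomposition of $f_a$ from the $b_i$'s termwise, so $\beta$-minimality says nothing about whether $\sum c_i<\sum b_i$.

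The paper's move in Case~2 is a single identity you almost wrote down and then talked yourself out of: since $f_{a-1}-f_{a-2}=f_{a-3}$, one has $f_a=f_{a-1}+f_{a-2}=2f_{a-1}-f_{a-3}$. Hence
\[
\sum_{i=2}^{a-1} b_i f_i - f_a \;=\; \sum_{i=2}^{a-1} b_i f_i + f_{a-3} - 2f_{a-1},
\]
so one may take $c_{a-1}=b_{a-1}-2\ge 0$, $c_{a-2}=b_{a-2}=0$, $c_{a-3}=b_{a-3}+1$, and $c_i=b_i$ otherwise, giving $\sum c_i=\sum b_i-1$. This needs only $b_{a-1}\ge 2$ and works already for $a=4$ (where $f_{a-3}=f_1=f_2$, so the $+1$ lands in $c_2$); your remark that $a=4$ ``falls out of the first case'' is incorrect, since $b_{a-2}=0$ there, and your aside that $a\notin\{0,1,2,3\}$ means $a\ge 5$ is also off by one.
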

	
	\begin{proof}
		Let us observe that 
		\[ \sum_{i=2}^{a-1} b_i f_i - f_a = \sum_{i=2}^{a-1} b_i f_i - f_{a-2} - f_{a-1} = \sum_{i=2}^{a-1} b_i f_i + f_{a-3} - 2f_{a-1}.\]
		
		Now, if $b_{a-2}\geq 1$ and $b_{a-1}\geq 1$, then $\sum_{i=2}^{a-1} b_i f_i - f_a = \sum_{i=2}^{a-1} c_i f_i$, with $c_i = b_i$ for $2\leq i\leq a-3$, $c_{a-2} = b_{a-2}-1$, and $c_{a-1} = b_{a-1}-1$. Thus, the result is proven in this case.
		
		Similarly, if $b_{a-2}=0$ and $b_{a-1}\geq 2$, then $\sum_{i=2}^{a-1} b_i f_i - f_a = \sum_{i=2}^{a-1} c_i f_i$, with $c_i = b_i$ for $2\leq i\leq a-4$, $c_{a-3} = b_{a-3}+1$, $c_{a-2} = b_{a-2} = 0$, and $c_{a-1} = b_{a-1}-2$. So, this case is also proven.
	\end{proof}
	
	\begin{lemma}\label{lem11}
		Let $a\in\mathbb{N}\setminus\{0,1,2\}$ and $\sum_{i=2}^{a-1} b_i f_i \geq f_a$, with $(b_2,\ldots,b_{a-1})\in\mathbb{N}^{a-2}$. Then we have that $\sum_{i=2}^{a-1} b_i f_i - f_a = \sum_{i=2}^{a-1} c_i f_i$, with $(c_2,\ldots,c_{a-1})\in\mathbb{N}^{a-2}$ and $\sum_{i=2}^{a-1} c_i < \sum_{i=2}^{a-1} b_i$.
	\end{lemma}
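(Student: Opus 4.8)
The plan is to argue by induction on $a$, assuming the statement for all smaller values $\geq 3$, and to split according to whether the tuple $(b_2,\ldots,b_{a-1})$ falls under the hypotheses of Lemma~\ref{lem10}. For the configurations covered there, namely ($b_{a-2}\geq 1$ and $b_{a-1}\geq 1$) or ($b_{a-2}=0$ and $b_{a-1}\geq 2$), Lemma~\ref{lem10} gives the conclusion at once. A short case analysis shows that the only remaining configurations are $b_{a-1}=0$, and $b_{a-1}=1$ together with $b_{a-2}=0$; for these the main tool is the identity $f_a=f_{a-1}+f_{a-2}$ combined with the induction hypothesis at levels $a-1$ and $a-2$.

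The base cases $a=3$ and $a=4$ I would check by hand using $f_2=1$, $f_3=2$, $f_4=3$: the inequality $\sum_{i=2}^{a-1} b_i f_i\geq f_a$ then forces the top coefficients to be large enough that $f_a$ can be subtracted off explicitly, and each configuration is either immediate, or covered by Lemma~\ref{lem10}, or vacuous.

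For the inductive step with $a\geq 5$, consider first the case $b_{a-1}=1$, $b_{a-2}=0$. Then $\sum_{i=2}^{a-1}b_if_i=f_{a-1}+\sum_{i=2}^{a-3}b_if_i$, so $\sum_{i=2}^{a-1}b_if_i-f_a=\sum_{i=2}^{a-3}b_if_i-f_{a-2}$ and the hypothesis gives $\sum_{i=2}^{a-3}b_if_i\geq f_{a-2}$; applying the induction hypothesis at level $a-2$ (legitimate since $a-2\geq 3$) and padding the output with two zero coefficients closes this case. In the case $b_{a-1}=0$ we have $\sum_{i=2}^{a-2}b_if_i\geq f_a>f_{a-1}$, so the induction hypothesis at level $a-1$ produces $(b_2',\ldots,b_{a-2}')$ with $\sum b_i'f_i=\sum b_if_i-f_{a-1}$, $\sum b_i'<\sum b_i$, and $\sum_{i=2}^{a-2}b_i'f_i\geq f_a-f_{a-1}=f_{a-2}$; if $b_{a-2}'\geq 1$ I decrease that coefficient by one, while if $b_{a-2}'=0$ I invoke the induction hypothesis once more, now at level $a-2$, to remove $f_{a-2}$. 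In either subcase the resulting coefficients have sum strictly less than $\sum b_i'$, hence strictly less than $\sum b_i$, and represent $\sum b_if_i-f_{a-1}-f_{a-2}=\sum b_if_i-f_a$, as required.

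The main obstacle I anticipate is the bookkeeping in the case $b_{a-1}=0$: after removing $f_{a-1}$ one may still have no $f_{a-2}$ summand available, forcing a second descent to level $a-2$, and one has to verify along the way that all indices stay in $\{2,\ldots,a-1\}$ and that the coefficient sum drops strictly at every step. Both points hold because each use of Lemma~\ref{lem10} or of the induction hypothesis already yields a strict drop in the coefficient sum, and we only ever reuse outputs of strictly smaller top index or append zeros.
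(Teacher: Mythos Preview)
Your proposal is correct and follows essentially the same approach as the paper: induction on $a$ with base cases $a=3,4$, invoking Lemma~\ref{lem10} for the configurations it covers, and for the remaining configurations using $f_a=f_{a-1}+f_{a-2}$ together with the induction hypothesis at levels $a-1$ and $a-2$. The only cosmetic difference is that in the case $b_{a-1}=0$ the paper first peels off $f_{a-2}$ (by decrementing $b_{a-2}$ when possible, otherwise by induction at level $a-2$) and then applies induction at level $a-1$, whereas you reverse the order, first applying induction at level $a-1$ and then removing $f_{a-2}$; both orderings work and the bookkeeping is the same.
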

	
	\begin{proof}
		We are going to prove the lemma using induction on $a$. 
		
		\textit{(Basis.)} We first analyse the cases $a=3$ and $a=4$.
		
		Let us take $a=3$. Then $\sum_{i=2}^{a-1} b_i f_i =b_2f_2=b_2$. Thus, having in mind that $f_2=1$, if $b_2 f_2 \geq f_3$, then $b_2 \geq f_3$. Therefore, $b_2f_2-f_3 = c_2f_2$ with $c_2=b_2-f_3$.
		
		Now, if $a=4$, then $\sum_{i=2}^{a-1} b_i f_i =b_2f_2 + b_3f_3$. Since $f_2=1$, $f_3=2$ and $f_4=3$, we have that, if $b_2f_2 + b_3f_3 \geq f_4$, then $b_2 + 2b_3 \geq 3$. Consequently, $(b_2,b_3)\in B=\mathbb{N}^2\setminus \{(0,0), (1,0), (0,1), (2,0) \}$. Let us see two particular cases of elements in $B$.
		\begin{enumerate}
			\item If $(b_2,b_3)=(k,0)$, $k\geq3$, then $b_2 f_2 + b_3 f_3 - f_ 4 = c_2 f_2 + c_3 f_3$ with $c_2=b_2-3$ and $c_3=0$.
			\item In any other case we apply Lemma~\ref{lem10}.
		\end{enumerate}
		
		\textit{(Induction hypothesis.)} We now suppose that $a\geq 5$, $\sum_{i=2}^{a-1} b_i f_i \geq f_a$, and that the statement is true for all $k\in\{3,4,\ldots,a-1\}$.
		
		\textit{(Induction step.)} In the light of Lemma~\ref{lem10}, we need only consider three cases. Moreover, we recall that $\sum_{i=2}^{a-1} b_i f_i - f_a = \sum_{i=2}^{a-1} b_i f_i - f_{a-2} - f_{a-1}$.
		\begin{enumerate}
			\item If $b_{a-2}\geq 1$ and $b_{a-1}=0$, then $\sum_{i=2}^{a-1} b_i f_i - f_a = \sum_{i=2}^{a-2} b'_i f_i - f_{a-1}$, with $b'_i = b_i$ for $2\leq i\leq a-3$ and $b'_{a-2} = b_{a-2}-1$. Now, by the induction hypothesis for $k=a-1$, the result is proven in this case.
			
			\item If $b_{a-2}=0$ and $b_{a-1}=1$, then $\sum_{i=2}^{a-1} b_i f_i - f_a = \sum_{i=2}^{a-3} b_i f_i - f_{a-2}$. Then, by the induction hypothesis for $k=a-2$, the case is proven.
			
			\item If $b_{a-2}=b_{a-1}=0$, then $\sum_{i=2}^{a-1} b_i f_i - f_a = \sum_{i=2}^{a-3} b_i f_i - f_{a-2} - f_{a-1}$. Now, by the induction hypothesis for $k=a-2$ (having in mind that $\sum_{i=2}^{a-3} b_i f_i - f_{a-2} \geq f_{a-1}> 0$) and $k=a-1$, it follows that $\sum_{i=2}^{a-3} b_i f_i - f_{a-2} - f_{a-1} = \sum_{i=2}^{a-2} b'_i f_i - f_{a-1} = \sum_{i=2}^{a-1} c_i f_i$, with $b'_{a-2}=c_{a-1}=0$ and $\sum_{i=2}^{a-1} c_i = \sum_{i=2}^{a-2} c_i < \sum_{i=2}^{a-2} b'_i = \sum_{i=2}^{a-3} b'_i < \sum_{i=2}^{a-3} b_i= \sum_{i=2}^{a-1} b_i$. Therefore, the case is proven.
		\end{enumerate}
	\end{proof}
	
	\begin{theorem}\label{thm12}
		Let $a\in\mathbb{N}\setminus\{0,1,2\}$. If $x\in\{0,1,\ldots,f_a-1\}$ and $\mathrm{Ap}(S(a),f_a) = \{w(0)=0, w(1),\ldots,w(f_a-1)\}$, then $w(x) = \beta(x)f_a+x$.
	\end{theorem}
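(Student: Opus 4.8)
The plan is to establish the two inequalities $w(x)\le \beta(x)f_a+x$ and $w(x)\ge \beta(x)f_a+x$ separately. Throughout I would use Proposition~\ref{prop02}, which tells us that $w(x)$ is the least element of $S(a)$ congruent to $x$ modulo $f_a$, together with the description of $\mathrm{msg}(S(a))$ in Proposition~\ref{prop05}. The substantive input is Lemma~\ref{lem09} (equivalently Lemma~\ref{lem11}), which lets us ``peel off'' one copy of $f_a$ from a representation $\sum_{i=2}^{a-1}d_if_i\ge f_a$ while strictly decreasing the sum of coefficients; everything else is bookkeeping around the Zeckendorf decomposition.

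\emph{Upper bound.} If $x=0$ there is nothing to prove, since $\beta(0)f_a+0=0=w(0)$. For $x\ge1$ I would take the Zeckendorf decomposition $x=\sum_{i=2}^{k}b_if_i$ (Lemma~\ref{lem00}), so that $\beta(x)=\sum_{i=2}^k b_i$ by Remark~\ref{rem00}. Since $f_k\le x<f_a$ and the Fibonacci numbers are strictly increasing from index $2$ on, we have $k\le a-1$, hence every $f_a+f_i$ with $b_i=1$ is one of the generators listed in Proposition~\ref{prop05}. Therefore
\[
\beta(x)f_a+x=\Big(\sum_{i=2}^{k}b_i\Big)f_a+\sum_{i=2}^{k}b_if_i=\sum_{i=2}^{k}b_i(f_a+f_i)\in S(a),
\]
and this element is congruent to $x$ modulo $f_a$, so $w(x)\le\beta(x)f_a+x$.

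\emph{Lower bound.} Let $s\in S(a)$ with $s\equiv x\pmod{f_a}$; I would show $s\ge\beta(x)f_a+x$. Expanding $s$ in the generators of Proposition~\ref{prop05} and using $f_0=0$, one may write $s=Nf_a+D$ with $D=\sum_{i=2}^{a-1}d_if_i$, $d_i\in\mathbb{N}$, and $N\ge\sum_{i=2}^{a-1}d_i$ (the inequality because copies of the generator $f_a+f_0=f_a$ may also occur). Now argue by strong induction on $D$. If $D<f_a$, then $D\equiv x\pmod{f_a}$ forces $D=x$, and $N\ge\sum_i d_i\ge\beta(x)$ by the definition of $\beta$, so $s=Nf_a+x\ge\beta(x)f_a+x$. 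If $D\ge f_a$, apply Lemma~\ref{lem11} to get $D-f_a=\sum_{i=2}^{a-1}c_if_i$ with $\sum_i c_i<\sum_i d_i\le N$; then $s=(N+1)f_a+(D-f_a)$ is again of the required shape, with $N+1\ge\sum_i c_i$, with $D-f_a<D$, and with $D-f_a\equiv x\pmod{f_a}$, so the induction hypothesis applies. Combining the two bounds yields $w(x)=\beta(x)f_a+x$.

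The only genuine ingredient is Lemma~\ref{lem09}/Lemma~\ref{lem11}, which is already proved; I do not anticipate a serious obstacle beyond two points that need a little care. First, that the Zeckendorf decomposition of an $x<f_a$ involves only the indices $2,\dots,a-1$, so it really produces an element of $S(a)$. Second, that the bound $N\ge\sum_i d_i$ (``enough generators have been used to realise $D$'') must survive the reduction step in the induction, which it does since $\sum_i c_i<\sum_i d_i\le N<N+1$.
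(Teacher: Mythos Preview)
Your proof is correct and follows essentially the same route as the paper. The upper bound is identical: use the Zeckendorf decomposition of $x$ (whose indices lie in $\{2,\dots,a-1\}$ because $x<f_a$) to exhibit $\beta(x)f_a+x$ as a non-negative combination of the generators $f_a+f_i$. For the lower bound the paper also relies on Lemma~\ref{lem09}/\ref{lem11}, but organises it slightly differently: instead of proving $s\ge\beta(x)f_a+x$ for every $s$ in the residue class by strong induction on $D$, it works directly with $w(x)=\sum_{i}b'_i(f_a+f_i)$, observes that $\sum_ib'_if_i=x+\alpha f_a$ for some $\alpha\ge0$, and uses a \emph{single} application of the lemma to show that $\alpha\ge1$ would force $w(x)-f_a\in S(a)$, contradicting $w(x)\in\mathrm{Ap}(S(a),f_a)$; hence $\alpha=0$ and $w(x)=(\sum_ib'_i)f_a+x\ge\beta(x)f_a+x$. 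Your iterated reduction and the paper's one-step contradiction are two packagings of the same idea.
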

	
	\begin{proof}
		The result is trivial for $x=0$. So let us suppose that $x\in\{1,\ldots,f_a-1\}$.
		
		If $x=\sum_{i=2}^k b_i f_i$ is the Zeckendorf decomposition of $x$, then $k<a$ and $\beta(x)f_a+x = \sum_{i=2}^{k} b_i (f_a+f_i) \in S(a)$. Moreover, $\beta(x)f_a+x \equiv x\pmod{f_a}$. Therefore, $w(x)\leq \beta(x)f_a+x$.
		
		We now suppose that $w(x)=\sum_{i=2}^{a-1} b'_i (f_a+f_i)$, with $(b'_2,\ldots,b'_{a-1})\in\mathbb{N}^{a-2}$. Obviously, $\sum_{i=2}^{a-1} b'_i f_i = x + \alpha f_a$ with $\alpha \in \mathbb{N}$. If $\alpha \geq 1$, then we can apply Lemma~\ref{lem09} and get that there exists $(c_2,\ldots,c_{a-1})\in\mathbb{N}^{a-2}$ such that $w(x)=\sum_{i=2}^{a-1} c_i (f_a+f_i) + f_a \left(1+\sum_{i=2}^{a-1} (b'_i-c_i)\right)$, with $\sum_{i=2}^{a-1} (b'_i-c_i)>0$. Therefore, $w(x)-f_a \in S(a)$, in contradiction with the fact that $w(x) \in\mathrm{Ap}(S(a),f_a)$. Thus, we have $\sum_{i=2}^{a-1} b'_i f_i=x$ and, in consequence, $w(x)=\left( \sum_{i=2}^{a-1} b'_i \right)f_a+x$. Finally, from the definition of $\beta(x)$, we can easily conclude that $w(x)\geq \beta(x)f_a+x$.
	\end{proof}
	
	\begin{example}\label{exmp-ap}
		By Example~\ref{exmp07}, we have that $S(7)=\langle 13,14,15,16,18,21 \rangle$. Furthermore, from Theorem~\ref{thm12} and the corresponding Zeckendorf decompositions, we deduce that
		\begin{itemize}
			\item $1=f_2;\, 2=f_3;\, 3=f_4;\, 5=f_5;\, 8=f_6 \Rightarrow \beta(1)=\beta(2)=\beta(3)=\beta(5)=\beta(8)=1 \Rightarrow w(1)=14;\, w(2)=15;\, w(3)=16;\, w(5)=18;\, w(8)=21$;
			\item $4=f_4+f_2;\, 6=f_5+f_2;\, 7=f_5+f_3; \, 9=f_6+f_2;\, 10=f_6+f_3;\, 11=f_6+f_4 \Rightarrow \beta(4)=\beta(6)=\beta(7)=\beta(9)=\beta(10)=\beta(11)=2 \Rightarrow w(4)=30;\, w(6)=32;\, w(7)=33;\, w(9)=35;\, w(10)=36;\, w(11)=37$;
			\item $12=f_6+f_4+f_2 \Rightarrow \beta(12)=3 \Rightarrow w(12)=51$.
		\end{itemize}
	\end{example}

	\section{The Frobenius number of $S(a)$}\label{frobeniusS(a)}
	
	The main aim in this section is to prove Theorem~\ref{thm17}, which provides us a formula for the Frobenius number of $S(a)$ as a function of $a$ and $f_a$. For this we need some previous results.
	
	If $x\in\mathbb{N}$, then we denote by $\gamma(x) = \max\{ l\in\mathbb{N} \mid f_l\leq x \}$.
	
	\begin{remark}\label{rem01}
		By Lemma~\ref{lem00}, it is clear that, if $x=\sum_{i=2}^k b_i f_i$ is the Zeckendorf decomposition of $x\in\mathbb{N}\setminus\{0\}$, then $\gamma(x)=k$. Moreover, $\gamma(0)=0$.
	\end{remark}
	
	The following result is an immediate consequence of Remarks~\ref{rem00} and \ref{rem01} and the definitions of $\beta(x)$ and $\gamma(x)$.
	
	\begin{lemma}\label{lem13}
		If $x\in\mathbb{N}\setminus\{0\}$, then $\beta(x) = \beta\left(x-f_{\gamma(x)}\right) + 1$.
	\end{lemma}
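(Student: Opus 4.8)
The plan is to read both sides of the identity directly off the Zeckendorf decomposition of $x$ and then quote Remarks~\ref{rem00} and~\ref{rem01}. Concretely, write $x=\sum_{i=2}^{k}b_i f_i$ for the Zeckendorf decomposition of $x$, so that $(b_2,\ldots,b_k)\in\{0,1\}^{k-1}$, $b_k=1$, and $b_ib_{i+1}=0$ for all $i\in\{2,\ldots,k-1\}$. By Remark~\ref{rem01} we have $\gamma(x)=k$, so the summand $f_{\gamma(x)}=f_k$ genuinely occurs in this decomposition, and by Remark~\ref{rem00} we have $\beta(x)=\sum_{i=2}^{k}b_i$.

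Next I would subtract $f_{\gamma(x)}$ off and identify what remains. Since $b_k=1$ and the non-consecutiveness condition gives $b_{k-1}b_k=0$, we get $b_{k-1}=0$, hence
\[
  x-f_{\gamma(x)}=\sum_{i=2}^{k-1}b_i f_i=\sum_{i=2}^{k-2}b_i f_i .
\]
If this quantity equals $0$, i.e.\ $x=f_{\gamma(x)}$, then $\beta\big(x-f_{\gamma(x)}\big)=\beta(0)=0$ while $\beta(x)=\sum_{i=2}^{k}b_i=b_k=1$, so the claimed identity holds. Otherwise, the right-hand expression is again a Zeckendorf decomposition: its coefficients lie in $\{0,1\}$, it has no two consecutive nonzero coefficients, and it is nonzero, so the uniqueness part of Lemma~\ref{lem00} identifies it as the Zeckendorf decomposition of $x-f_{\gamma(x)}$.

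Finally, applying Remark~\ref{rem00} to $x-f_{\gamma(x)}$ yields
\[
  \beta\big(x-f_{\gamma(x)}\big)=\sum_{i=2}^{k-2}b_i=\Big(\sum_{i=2}^{k}b_i\Big)-1=\beta(x)-1,
\]
which rearranges to the statement. There is no substantial obstacle here; the only point requiring a moment's attention is the degenerate case $x=f_{\gamma(x)}$ (handled above) and the routine check that truncating a Zeckendorf decomposition at its top term again satisfies the Zeckendorf conditions.
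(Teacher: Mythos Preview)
Your proposal is correct and follows exactly the route the paper indicates: the paper does not give a written proof but states that the lemma is an immediate consequence of Remarks~\ref{rem00} and~\ref{rem01} and the definitions of $\beta(x)$ and $\gamma(x)$, which is precisely what you spell out. Your explicit handling of the degenerate case $x=f_{\gamma(x)}$ and the verification that the truncated sum is again a Zeckendorf decomposition simply fill in the details the paper leaves to the reader.
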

	
	Since Zeckendorf decompositions do not admit consecutive Fibonacci numbers as addends, we easily have the following result.
	
	\begin{lemma}\label{lem13b}
		If $x\in\mathbb{N}\setminus\{0\}$, then $\gamma\left(x-f_{\gamma(x)}\right) \leq \gamma(x) - 2$.
	\end{lemma}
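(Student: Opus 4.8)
The plan is to reduce everything to the Zeckendorf decomposition of $x$ via Remark~\ref{rem01}. Write $x=\sum_{i=2}^{k}b_if_i$ for the Zeckendorf decomposition of $x$, so that $(b_2,\ldots,b_k)\in\{0,1\}^{k-1}$, $b_k=1$, $b_ib_{i+1}=0$ for all $i\in\{2,\ldots,k-1\}$, and, by Remark~\ref{rem01}, $\gamma(x)=k$. Note that since $x\geq 1$ and $f_2=1$ we have $k\geq 2$.

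Next I would subtract the top term: because $b_k=1$,
\[ x-f_{\gamma(x)}=x-f_k=\sum_{i=2}^{k-1}b_if_i. \]
The key observation is that the non-consecutiveness condition forces $b_{k-1}=0$: indeed $b_{k-1}b_k=0$ and $b_k=1$. Hence in fact $x-f_k=\sum_{i=2}^{k-2}b_if_i$, a sum which still satisfies the non-consecutiveness condition, i.e. it is the Zeckendorf decomposition of $x-f_k$ (after discarding any leading zero coefficients) whenever $x-f_k>0$.

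I would then split into two cases. If $x-f_k=0$, then $\gamma(x-f_k)=\gamma(0)=0\leq k-2$, since $k\geq 2$. If $x-f_k>0$, then its Zeckendorf decomposition has all its (nonzero) indices in $\{2,\ldots,k-2\}$, so by Remark~\ref{rem01} again $\gamma(x-f_k)\leq k-2=\gamma(x)-2$. In both cases $\gamma\left(x-f_{\gamma(x)}\right)\leq \gamma(x)-2$, which is the claim. The only delicate point, and the one I would be careful to spell out, is the edge case $x-f_{\gamma(x)}=0$ together with the bound $k\geq 2$; the rest is an immediate consequence of the absence of consecutive indices in a Zeckendorf decomposition. \qed
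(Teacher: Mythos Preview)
Your proposal is correct and is exactly the argument the paper has in mind: the paper does not spell out a proof but simply notes that the lemma is immediate from the fact that Zeckendorf decompositions have no consecutive Fibonacci summands, which is precisely what you unpack. Your treatment of the edge case $x-f_{\gamma(x)}=0$ together with $k\geq 2$ is a welcome clarification of that one-line justification.
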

	
	We can give $\beta(x)$ very easily in some cases. For example, if  $a\in\mathbb{N}\setminus\{0\}$, then $\beta(f_a)=1$. Let us see another case. As usual, $\lfloor x \rfloor = \max\{z\in\mathbb{Z} \mid z\leq x \}$.
	 
	\begin{lemma}\label{lem14}
		If $a\in\mathbb{N}\setminus\{0\}$, then $\beta(f_a-1) = \left\lfloor \frac{a-1}{2} \right\rfloor$.
	\end{lemma}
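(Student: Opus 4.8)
The plan is to prove the identity by induction on $a$, using the recursion for $\beta$ recorded in Lemma~\ref{lem13}. First I would dispose of the base cases $a=1$ and $a=2$: in both cases $f_a-1=0$, so $\beta(f_a-1)=\beta(0)=0$ by Remark~\ref{rem00}, and this agrees with $\left\lfloor\frac{a-1}{2}\right\rfloor=0$.

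For $a\ge 3$ I would first determine $\gamma(f_a-1)$. Since the Fibonacci sequence is strictly increasing from index $2$ onwards, we have $f_{a-1}<f_a$, hence $f_{a-1}\le f_a-1<f_a$; by the definition of $\gamma$ this forces $\gamma(f_a-1)=a-1$. Note also that $f_a-1\ge f_3-1=1$, so $f_a-1\in\mathbb{N}\setminus\{0\}$ and Lemma~\ref{lem13} is applicable. Combining Lemma~\ref{lem13} with the Fibonacci recurrence $f_a=f_{a-1}+f_{a-2}$ gives
\[ \beta(f_a-1)=\beta\big(f_a-1-f_{\gamma(f_a-1)}\big)+1=\beta\big(f_a-1-f_{a-1}\big)+1=\beta(f_{a-2}-1)+1. \]
Since $a\ge 3$ we have $a-2\ge 1$, so the induction hypothesis applies to $a-2$ and yields $\beta(f_{a-2}-1)=\left\lfloor\frac{a-3}{2}\right\rfloor$. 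Using that adding an integer commutes with the floor function, we conclude
\[ \beta(f_a-1)=\left\lfloor\frac{a-3}{2}\right\rfloor+1=\left\lfloor\frac{a-1}{2}\right\rfloor, \]
which completes the induction.

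The argument is short, and the only points requiring care are the computation of $\gamma(f_a-1)$ and the check that $f_a-1\neq 0$ (so that Lemma~\ref{lem13} may legitimately be invoked), which is exactly what dictates splitting off the cases $a\le 2$; there is no substantive obstacle beyond this bookkeeping. (Alternatively, one could exhibit the Zeckendorf decomposition of $f_a-1$ explicitly — it is $f_3+f_5+\cdots+f_{a-1}$ when $a$ is even and $f_2+f_4+\cdots+f_{a-1}$ when $a$ is odd, by the standard telescoping identities for sums of Fibonacci numbers — and then count summands via Remark~\ref{rem00}; but the inductive proof above is cleaner and reuses only machinery already established.)
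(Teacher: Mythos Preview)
Your proof is correct and follows the same approach as the paper: induction on $a$ with base cases $a\in\{1,2\}$, then applying Lemma~\ref{lem13} to reduce $\beta(f_a-1)$ to $\beta(f_{a-2}-1)+1$. Your write-up is in fact slightly more careful than the paper's, since you explicitly justify $\gamma(f_a-1)=a-1$ and the applicability of Lemma~\ref{lem13}, whereas the paper leaves these implicit.
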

	
	\begin{proof}
		We will argue by mathematical induction on $a$. First, it is trivial that the result is true for $a\in\{1,2\}$.
		
		Now, by Lemma~\ref{lem13}, if $a\geq 3$, then $\beta(f_a - 1) = \beta(f_a - 1 - f_{a-1})+ 1 = \beta(f_{a-2} - 1) + 1$. Therefore, by the induction hypothesis on $a-2$, we have that $\beta(f_a - 1) = \left\lfloor \frac{a-3}{2} \right\rfloor + 1 = \left\lfloor \frac{a-1}{2} \right\rfloor$.
	\end{proof}
	
	In the general case, we can show an upper bound.
	
	\begin{lemma}\label{lem15}
		If $x\in\mathbb{N}$, then $\beta(x) \leq \left\lfloor \frac{\gamma(x)}{2} \right\rfloor$.
	\end{lemma}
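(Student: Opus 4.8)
The plan is to argue by strong induction on $x$, pivoting on the reduction $x \mapsto x - f_{\gamma(x)}$ supplied by Lemmas~\ref{lem13} and \ref{lem13b}.

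First I would dispose of the base cases. For $x=0$ the claim reads $\beta(0)=0\leq\lfloor 0/2\rfloor$, which is trivial. More importantly, I would treat separately every $x$ with $x-f_{\gamma(x)}=0$, i.e. $x=f_{\gamma(x)}$: here $\beta(x)=1$ by Remark~\ref{rem00}, and since $f_2=1$ we have $\gamma(x)\geq 2$ for all $x\geq 1$, so $\lfloor\gamma(x)/2\rfloor\geq 1$ and the inequality holds. Isolating this case is necessary because Lemma~\ref{lem13b} is only stated for nonzero argument.

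For the inductive step, let $x\geq 1$ with $x':=x-f_{\gamma(x)}\neq 0$, and assume the statement for all smaller non-negative integers. Since $f_{\gamma(x)}\geq 1$, we have $0<x'<x$, so the induction hypothesis applies to $x'$. Lemma~\ref{lem13b} gives $\gamma(x')\leq\gamma(x)-2$, whence $\lfloor\gamma(x')/2\rfloor\leq\lfloor(\gamma(x)-2)/2\rfloor=\lfloor\gamma(x)/2\rfloor-1$. Combining this with Lemma~\ref{lem13},
\[
\beta(x)=\beta(x')+1\leq\left\lfloor\frac{\gamma(x')}{2}\right\rfloor+1\leq\left\lfloor\frac{\gamma(x)}{2}\right\rfloor,
\]
completing the induction.

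I do not anticipate any genuine obstacle: this is a short descent, and the only delicate points are the harmless base case $x'=0$ noted above and the elementary identity $\lfloor(n-2)/2\rfloor=\lfloor n/2\rfloor-1$. As an alternative one could skip the induction altogether: by Remarks~\ref{rem00} and \ref{rem01}, $\beta(x)$ is the number of ones in the Zeckendorf decomposition $x=\sum_{i=2}^{\gamma(x)}b_if_i$, whose support is a subset of $\{2,\ldots,\gamma(x)\}$ with no two consecutive integers, and such a subset has at most $\lceil(\gamma(x)-1)/2\rceil=\lfloor\gamma(x)/2\rfloor$ elements.
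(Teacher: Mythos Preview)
Your proof is correct and follows essentially the same strong-induction argument as the paper, invoking Lemmas~\ref{lem13} and~\ref{lem13b} in the same chain. The paper simply takes $x\in\{0,1,2\}$ as base cases and proceeds directly; your separate treatment of the case $x'=0$ is harmless but not actually needed, since the hypothesis of Lemma~\ref{lem13b} is that $x\neq 0$ (not that $x-f_{\gamma(x)}\neq 0$), and with $\gamma(0)=0$ defined in Remark~\ref{rem01} the inductive chain goes through even when $x'=0$.
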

	
	\begin{proof}
		We will use mathematical induction on $x$. First, the result is trivially true for $x\in\{0,1,2\}$.
		
		Now, let us suppose that $x\geq 3$ and that $\beta(y) \leq \left\lfloor \frac{\gamma(y)}{2} \right\rfloor$ for all $y<x$. Then, by Lemmas~\ref{lem13} and \ref{lem13b}, we have that
		\[ \beta(x) = \beta\left(x-f_{\gamma(x)}\right) + 1 \leq  \left\lfloor \frac{\gamma\left(x-f_{\gamma(x)}\right)}{2} \right\rfloor + 1 \leq \left\lfloor \frac{\gamma(x)-2}{2} \right\rfloor + 1 = \left\lfloor \frac{\gamma(x)}{2} \right\rfloor. \]
	\end{proof}
	
%

	We are ready to show the announced theorem.
	
	\begin{theorem}\label{thm17}
		If $a\in\mathbb{N}\setminus\{0,1,2\}$, then $\mathrm{F}(S(a)) =\left\lfloor \frac{a-1}{2} \right\rfloor f_a -1$.
	\end{theorem}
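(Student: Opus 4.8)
The plan is to combine part~(1) of Proposition~\ref{prop03} with the explicit description of the Ap\'ery set from Theorem~\ref{thm12}. By Proposition~\ref{prop03}, $\mathrm{F}(S(a)) = \max(\mathrm{Ap}(S(a),f_a)) - f_a$, and by Theorem~\ref{thm12} we have $\mathrm{Ap}(S(a),f_a) = \{\beta(x)f_a + x \mid x \in \{0,1,\ldots,f_a-1\}\}$. Hence it suffices to compute $M := \max\{\beta(x)f_a + x \mid 0 \le x \le f_a - 1\}$ and show that $M = \left\lfloor \frac{a-1}{2}\right\rfloor f_a + f_a - 1$; the claimed formula for $\mathrm{F}(S(a))$ then follows by subtracting $f_a$.

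For the lower bound $M \ge \left\lfloor \frac{a-1}{2}\right\rfloor f_a + f_a - 1$, I would simply evaluate the expression at $x = f_a - 1$, which lies in the admissible range since $a \ge 3$ forces $f_a \ge 2$. By Lemma~\ref{lem14}, $\beta(f_a - 1) = \left\lfloor \frac{a-1}{2}\right\rfloor$, so the term $\beta(f_a-1)f_a + (f_a - 1)$ already attains the claimed value, giving $M \ge \left\lfloor \frac{a-1}{2}\right\rfloor f_a + f_a - 1$.

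For the upper bound $M \le \left\lfloor \frac{a-1}{2}\right\rfloor f_a + f_a - 1$, fix any $x$ with $0 \le x \le f_a - 1$. Since $x < f_a$, no Fibonacci number of index $\ge a$ is $\le x$, so $\gamma(x) \le a - 1$; as $t \mapsto \lfloor t/2\rfloor$ is nondecreasing, Lemma~\ref{lem15} gives $\beta(x) \le \left\lfloor \frac{\gamma(x)}{2}\right\rfloor \le \left\lfloor \frac{a-1}{2}\right\rfloor$. Combining this with $x \le f_a - 1$ yields $\beta(x)f_a + x \le \left\lfloor \frac{a-1}{2}\right\rfloor f_a + f_a - 1$. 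Together with the previous paragraph this gives $M = \left\lfloor \frac{a-1}{2}\right\rfloor f_a + f_a - 1$, and therefore $\mathrm{F}(S(a)) = M - f_a = \left\lfloor \frac{a-1}{2}\right\rfloor f_a - 1$.

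Since Lemmas~\ref{lem14} and \ref{lem15} are already at hand, there is no serious obstacle; the only point needing a little care is verifying that the maximum of $\beta(x)f_a + x$ over the range is genuinely attained at $x = f_a - 1$ rather than at some smaller $x$ with a larger $\beta$-value. This is precisely what the uniform bound $\beta(x) \le \left\lfloor \frac{a-1}{2}\right\rfloor$ from Lemma~\ref{lem15} rules out: any deficit in the summand $x$ is at most $f_a - 1$, which cannot be compensated by an increase of the coefficient of $f_a$ beyond $\left\lfloor \frac{a-1}{2}\right\rfloor$.
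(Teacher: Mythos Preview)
Your proof is correct and follows essentially the same approach as the paper: both compute $\max(\mathrm{Ap}(S(a),f_a))$ via Theorem~\ref{thm12}, using Lemma~\ref{lem14} to show the value $\left\lfloor\frac{a-1}{2}\right\rfloor f_a + f_a - 1$ is attained at $x=f_a-1$ and Lemma~\ref{lem15} together with $\gamma(x)\le a-1$ to bound all other terms, then apply Proposition~\ref{prop03}. Your write-up is simply more explicit about separating the lower and upper bounds, whereas the paper compresses the argument into a single sentence.
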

	
	\begin{proof}
		If $x\in\{0,1,\ldots,f_a-1\}$, then $\gamma(x)\leq a-1$. Thus, from Theorem~\ref{thm12} and Lemmas~\ref{lem14} and \ref{lem15}, we deduce that $\max\left( \mathrm{Ap}(S(a),f_a)\right) = \left\lfloor \frac{a-1}{2} \right\rfloor f_a + f_a -1$. Now, by Proposition~\ref{prop03}, we conclude that $\mathrm{F}(S(a)) = \left\lfloor \frac{a-1}{2} \right\rfloor f_a - 1$.
	\end{proof}
	
	\begin{example}\label{exmp18}
		By Example~\ref{exmp07}, we have that $S(7)=\langle 13,14,15,16,18,21 \rangle$. From Theorem~\ref{thm17}, we get that $\mathrm{F}(S(7)) = \left\lfloor \frac{7-1}{2} \right\rfloor f_7 - 1 = 38$.
	\end{example}
	
	Since $\mathrm{e}(S(a))=a-1$ and $\mathrm{m}(S(a))=f_a$, we can reformulate Theorem~\ref{thm17} as follows.
	
	\begin{corollary}\label{cor19}
		If $a\in\mathbb{N}\setminus\{0,1,2\}$, then $\mathrm{F}(S(a)) =\left\lfloor \frac{\mathrm{e}(S(a))}{2} \right\rfloor \mathrm{m}(S(a)) -1$.
	\end{corollary}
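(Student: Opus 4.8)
The plan is to observe that Corollary~\ref{cor19} is simply a restatement of Theorem~\ref{thm17} once the invariants $\mathrm{e}(S(a))$ and $\mathrm{m}(S(a))$ have been expressed in terms of $a$ and $f_a$, so the proof will consist of assembling already-proved facts rather than any new argument.

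First I would recall that, by Corollary~\ref{cor06}, $\mathrm{e}(S(a)) = a-1$, and hence $\left\lfloor \frac{a-1}{2}\right\rfloor = \left\lfloor \frac{\mathrm{e}(S(a))}{2}\right\rfloor$. Next I would identify the multiplicity: since the multiplicity of a numerical semigroup is the least element of its minimal system of generators, and by Proposition~\ref{prop05} we have $\mathrm{msg}(S(a)) = \{f_a+f_0, f_a+f_2, \ldots, f_a+f_{a-1}\}$ whose least element is $f_a+f_0 = f_a$ (recall $f_0=0$), it follows that $\mathrm{m}(S(a)) = f_a$.

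Finally I would substitute $a-1 = \mathrm{e}(S(a))$ and $f_a = \mathrm{m}(S(a))$ into the formula $\mathrm{F}(S(a)) = \left\lfloor \frac{a-1}{2}\right\rfloor f_a - 1$ provided by Theorem~\ref{thm17}, obtaining $\mathrm{F}(S(a)) = \left\lfloor \frac{\mathrm{e}(S(a))}{2}\right\rfloor \mathrm{m}(S(a)) - 1$, which is precisely the claimed identity.

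There is no genuine obstacle here; the only step deserving a word of justification is the equality $\mathrm{m}(S(a)) = f_a$, i.e.\ that no element of $S(a)$ lies strictly between $0$ and $f_a$. This is immediate: every nonzero element of $S(a)$ is a sum of elements of $\mathrm{msg}(S(a))$, each of which is at least $f_a+f_0 = f_a$, so the least positive element of $S(a)$ is exactly $f_a$.
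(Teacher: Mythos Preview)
Your proposal is correct and mirrors the paper's own argument: the paper simply remarks that $\mathrm{e}(S(a))=a-1$ and $\mathrm{m}(S(a))=f_a$ (the latter already noted in the proof of Proposition~\ref{prop05}) and then substitutes into Theorem~\ref{thm17}. Your extra line justifying $\mathrm{m}(S(a))=f_a$ is fine but not strictly needed, as this was established earlier.
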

	
	\begin{remark}
		It is easy to check that Theorem~\ref{thm17} and Corollary~\ref{cor19} are also true for $a=2$.
	\end{remark}

	\section{The genus of $S(a)$}\label{genusS(a)}
	
	In this section we will give a formula for the genus of $S(a)$. As usual, if $A$ is a set, then we denote by $\#(A)$ the cardinality of $A$. Moreover, if $m,n\in\mathbb{N}$ with $m\leq n-2$, then we denote by $\mathcal{F}_n(m)$ the set
	\[ \{ X\subseteq \{2,\ldots,n-1\} \mid  \#(X)=m \mbox{ and no two consecutive integers belong to } X \}. \]
	It is clear that $\#(\mathcal{F}_n(m))=0$ for all $m > \frac{n-1}{2}$. In other case, we have a classical result on counting subsets. 
	\begin{lemma}{\cite[Lemma~1]{kaplansky}}\label{lem20}
		If $m,n\in\mathbb{N}\setminus\{0\}$ and $m\leq \frac{n-1}{2}$, then $\#(\mathcal{F}_n(m)) = \binom{n-1-m}{m}$.
	\end{lemma}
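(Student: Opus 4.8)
The plan is to prove Lemma~\ref{lem20} by exhibiting an explicit bijection between $\mathcal{F}_n(m)$ and the collection of all $m$-element subsets of a set of size $n-1-m$; the formula then drops out of the definition of the binomial coefficient.

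First I would write an arbitrary element of $\mathcal{F}_n(m)$ as $X=\{x_1<x_2<\cdots<x_m\}\subseteq\{2,\ldots,n-1\}$, noting that the defining property ``no two consecutive integers belong to $X$'' is exactly $x_{j+1}\geq x_j+2$ for $1\leq j\leq m-1$. Define $\phi(X)=\{x_1,\,x_2-1,\,x_3-2,\,\ldots,\,x_m-(m-1)\}$, i.e.\ subtract $j-1$ from the $j$-th smallest element of $X$. The condition $x_{j+1}-x_j\geq 2$ becomes $(x_{j+1}-j)-(x_j-(j-1))\geq 1$, so the numbers $x_j-(j-1)$ are strictly increasing and $\phi(X)$ is a genuine $m$-subset; its smallest element is $x_1\geq 2$ and its largest is $x_m-(m-1)\leq(n-1)-(m-1)=n-m$, so $\phi(X)\subseteq\{2,\ldots,n-m\}$, a set of cardinality $n-m-1$.

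Next I would verify that $\phi$ is a bijection onto the family of all $m$-subsets of $\{2,\ldots,n-m\}$. Injectivity is immediate, since, once both sets are listed in increasing order, $\phi$ acts as a fixed shift on each coordinate. For surjectivity, given $Y=\{y_1<\cdots<y_m\}\subseteq\{2,\ldots,n-m\}$, put $x_j=y_j+(j-1)$; then $x_{j+1}-x_j=(y_{j+1}-y_j)+1\geq 2$ and $x_m=y_m+(m-1)\leq(n-m)+(m-1)=n-1$, so $\{x_1,\ldots,x_m\}\in\mathcal{F}_n(m)$ and $\phi$ sends it to $Y$. Hence $\#(\mathcal{F}_n(m))=\binom{n-m-1}{m}=\binom{n-1-m}{m}$. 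The hypothesis $m\leq\frac{n-1}{2}$ is precisely what guarantees $n-m-1\geq m$, so that the target set is large enough to contain an $m$-subset; and when $m>\frac{n-1}{2}$ one has $\binom{n-1-m}{m}=0$, consistent with the observation already recorded in the excerpt.

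There is no real obstacle here, as this is the classical ``stars and bars'' count of sparse subsets; the only point requiring care is the bookkeeping of the index shift and of the two endpoints, to be certain the image lies in $\{2,\ldots,n-m\}$ rather than an off-by-one neighbour. As an alternative one could induct on $n$: splitting $\mathcal{F}_n(m)$ according to whether $n-1\in X$ (if so, then $n-2\notin X$) yields $\#(\mathcal{F}_n(m))=\#(\mathcal{F}_{n-1}(m))+\#(\mathcal{F}_{n-2}(m-1))$, and the claim follows from Pascal's identity $\binom{n-1-m}{m}=\binom{n-2-m}{m}+\binom{n-2-m}{m-1}$ together with the trivial boundary cases $m=1$ (where $\#(\mathcal{F}_n(1))=n-2=\binom{n-2}{1}$) and $n-1=2m$ (a single admissible set $\{2,4,\ldots,2m\}$). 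I would present the bijective argument as the main proof, since it is the shortest and most transparent.
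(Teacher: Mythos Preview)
Your bijective argument is correct and cleanly executed; the index bookkeeping and the endpoints are handled without error. Note, however, that the paper does not give its own proof of this lemma at all: it is stated with a citation to \cite[Lemma~1]{kaplansky} and used as a black box. So there is no in-paper proof to compare against; your proposal simply supplies what the paper omits, and either the bijection or the Pascal-identity induction you sketch would serve equally well as a self-contained justification.
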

	
	\begin{remark}\label{rem21}
	The Zeckendorf decomposition gives us a bijection between the sets 
	$\{1,\ldots,f_a-1\}$ and $\mathcal{F}(a) = \mathcal{F}_a(1) \cup \cdots \cup \mathcal{F}_a\left( \left\lfloor \frac{a-1}{2} \right\rfloor \right)$. Indeed, if $x\in\{1,\ldots,f_a-1\}$ has the Zeckendorf decomposition $\sum_{i=2}^k b_i f_i$ ($k<a$, $(b_2,\ldots,b_k)\in\{0,1\}^{k-1}$, $b_k=1$, and $b_ib_{i+1}=0$ for all $i\in\{2,\ldots,k-1\}$), then we can associate $x$ with the set $B(x) \in \mathcal{F}_a(\beta(x))$ consisting of all subscripts $j$ such that $b_j=1$. Now, from the well known equality $f_a=\sum_{j=0}^{\left\lfloor \frac{a-1}{2} \right\rfloor} \binom{a-1-j}{j}$ and the uniqueness of the Zeckendorf decomposition, the correspondence associating $x$ to $B(x)$ is the sought bijection.	
	\end{remark}
	
	As a consequence of Theorem~\ref{thm12}, Lemma~\ref{lem20}, and Remark~\ref{rem21}, we have the following result.
	
	\begin{proposition}\label{prop22}
		If $a\in\mathbb{N}\setminus\{0,1,2\}$, then
		\[ \mathrm{Ap}\left(S(a),f_a\right)\setminus\{0\} = \left\{ \left(\#(B)\right)f_a + \sum_{b\in B} f_b \mid B \in \mathcal{F}(a)\setminus\{\emptyset\} \right\}. \]
		Moreover, if $\{B_1,B_2\} \subseteq \mathcal{F}(a)\setminus\{\emptyset\}$, then $\left(\#(B_1)\right)f_a + \sum_{b\in B_1} f_b = \left(\#(B_2)\right)f_a + \sum_{b\in B_2} f_b$ if and only if $B_1=B_2$.
	\end{proposition}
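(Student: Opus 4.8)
The plan is to read the statement off directly by combining the explicit description of $\mathrm{Ap}(S(a),f_a)$ given in Theorem~\ref{thm12} with the Zeckendorf bijection recorded in Remark~\ref{rem21}. By Proposition~\ref{prop02}, $\mathrm{Ap}(S(a),f_a)\setminus\{0\}=\{w(1),\ldots,w(f_a-1)\}$, and these $f_a-1$ elements are pairwise distinct since $w(x)\equiv x\pmod{f_a}$. First I would fix $x\in\{1,\ldots,f_a-1\}$, write its Zeckendorf decomposition $\sum_{i=2}^k b_i f_i$ (with $k<a$), and pass to the associated set $B(x)=\{\,j : b_j=1\,\}\in\mathcal{F}_a(\beta(x))$ from Remark~\ref{rem21}, so that $\#(B(x))=\beta(x)$ and $x=\sum_{b\in B(x)}f_b$. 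Then Theorem~\ref{thm12} gives
\[ w(x)=\beta(x)f_a+x=\#(B(x))f_a+\sum_{b\in B(x)}f_b. \]

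Next I would invoke Remark~\ref{rem21} once more: the map $x\mapsto B(x)$ is a bijection from $\{1,\ldots,f_a-1\}$ onto $\mathcal{F}(a)\setminus\{\emptyset\}$. Hence, as $x$ runs over $\{1,\ldots,f_a-1\}$, the sets $B(x)$ run over $\mathcal{F}(a)\setminus\{\emptyset\}$ exactly once each, and therefore
\[ \mathrm{Ap}(S(a),f_a)\setminus\{0\}=\{w(1),\ldots,w(f_a-1)\}=\left\{\#(B)f_a+\sum_{b\in B}f_b \;\Big\vert\; B\in\mathcal{F}(a)\setminus\{\emptyset\}\right\}, \]
which is the first assertion.

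For the ``moreover'' part I would prove injectivity of $B\mapsto \#(B)f_a+\sum_{b\in B}f_b$ on $\mathcal{F}(a)\setminus\{\emptyset\}$ directly; it also follows formally from the two bijections just used. Suppose $\#(B_1)f_a+\sum_{b\in B_1}f_b=\#(B_2)f_a+\sum_{b\in B_2}f_b$ with $B_1,B_2\in\mathcal{F}(a)\setminus\{\emptyset\}$. By Remark~\ref{rem21} each $B_i$ equals $B(x_i)$ for some $x_i\in\{1,\ldots,f_a-1\}$, so $\sum_{b\in B_i}f_b=x_i$ lies in $\{1,\ldots,f_a-1\}$; reducing the displayed equality modulo $f_a$ forces $x_1\equiv x_2\pmod{f_a}$, hence $x_1=x_2$, that is $\sum_{b\in B_1}f_b=\sum_{b\in B_2}f_b$. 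Since this common value has a unique Zeckendorf decomposition by Lemma~\ref{lem00}, we conclude $B_1=B_2$ (and then the coefficients of $f_a$ coincide automatically). The converse implication is trivial.

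I do not expect a genuine obstacle here, since the proposition is essentially a repackaging of earlier results. The only point deserving a word of care is that $\sum_{b\in B}f_b\in\{1,\ldots,f_a-1\}$ for every nonempty $B\in\mathcal{F}(a)$, which is precisely what legitimises the reduction modulo $f_a$ in the injectivity argument; but this is already contained in the bijection of Remark~\ref{rem21} (equivalently, in the identity $f_{a-1}+f_{a-3}+\cdots=f_a-1$), so no additional work is required.
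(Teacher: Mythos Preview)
Your proposal is correct and follows essentially the same route the paper indicates: the proposition is stated there as an immediate consequence of Theorem~\ref{thm12}, Lemma~\ref{lem20}, and Remark~\ref{rem21}, and your argument is precisely the unpacking of that claim via the Zeckendorf bijection and the formula $w(x)=\beta(x)f_a+x$. The only minor difference is that you do not invoke Lemma~\ref{lem20} explicitly, but that lemma is only used in Remark~\ref{rem21} to confirm the bijection by a cardinality count, which you take as given.
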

	
	The next result is an easy consequence of Proposition~\ref{prop03}.
	
	\begin{lemma}\label{lem23}
		If $S$ is a numerical semigroup, $n\in S\setminus\{0\}$, $\{k_1,k_2,\ldots,k_{n-1}\}\subseteq\mathbb{N}$, and $\mathrm{Ap}(S,n)=\{0,k_1n+1,k_2n+2,\ldots,k_{n-1}n+n-1\}$, then $\mathrm{g}(S)=k_1+k_2+\cdots+k_{n-1}$.
	\end{lemma}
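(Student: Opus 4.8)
The plan is to read this off directly from part~(2) of Proposition~\ref{prop03}, which expresses $\mathrm{g}(S)$ in terms of the sum of the Ap\'ery set. The only preliminary observation needed is that the given description of $\mathrm{Ap}(S,n)$ is already the canonical one from Proposition~\ref{prop02}: since $k_i n + i \equiv i \pmod{n}$ for each $i\in\{1,\ldots,n-1\}$ and $0\equiv 0\pmod n$, the $n$ listed elements are pairwise incongruent modulo $n$, hence (being exactly $n$ in number, as Proposition~\ref{prop02} guarantees) they are precisely $w(0)=0$ and $w(i)=k_i n+i$ for $i\in\{1,\ldots,n-1\}$. So there is nothing to identify beyond matching residues.

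Next I would compute the sum over the Ap\'ery set:
\[
\sum_{w\in\mathrm{Ap}(S,n)} w \;=\; 0 + \sum_{i=1}^{n-1}\bigl(k_i n + i\bigr) \;=\; n\sum_{i=1}^{n-1} k_i \;+\; \sum_{i=1}^{n-1} i \;=\; n\sum_{i=1}^{n-1} k_i \;+\; \frac{n(n-1)}{2}.
\]
Substituting this into the formula $\mathrm{g}(S)=\frac{1}{n}\bigl(\sum_{w\in\mathrm{Ap}(S,n)} w\bigr)-\frac{n-1}{2}$ from Proposition~\ref{prop03}(2) gives
\[
\mathrm{g}(S) \;=\; \sum_{i=1}^{n-1} k_i \;+\; \frac{n-1}{2} \;-\; \frac{n-1}{2} \;=\; k_1 + k_2 + \cdots + k_{n-1},
\]
as claimed.

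There is no genuine obstacle here: the argument is a one-line substitution once the residue-matching remark is in place, and the only point requiring the slightest care is that the arithmetic-series term $\sum_{i=1}^{n-1} i = \frac{n(n-1)}{2}$ is exactly what cancels the correction term $-\frac{n-1}{2}$ after dividing by $n$. No induction, no appeal to the specific structure of $S(a)$, and no use of the Fibonacci machinery is needed; the lemma is a purely formal consequence of Propositions~\ref{prop02} and~\ref{prop03}, which is why it is stated for arbitrary numerical semigroups.
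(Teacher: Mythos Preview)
Your proof is correct and follows exactly the approach indicated in the paper, which does not give an explicit proof but simply states that the lemma is ``an easy consequence of Proposition~\ref{prop03}.'' Your substitution into the formula of Proposition~\ref{prop03}(2) is precisely the intended argument.
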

	
	By Theorem~\ref{thm12} and Lemma~\ref{lem23}, we can deduce the following result.
	
	\begin{lemma}\label{lem24}
		If $a\in\mathbb{N}\setminus\{0,1,2\}$, then $\mathrm{g}(S(a)) = \sum_{x=1}^{f_a-1} \beta(x)$.
	\end{lemma}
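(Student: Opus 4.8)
The plan is to combine Theorem~\ref{thm12} with Lemma~\ref{lem23}, and essentially nothing else is needed. By Theorem~\ref{thm12}, for every $x\in\{1,\ldots,f_a-1\}$ the least element of $S(a)$ congruent to $x$ modulo $f_a$ is $w(x)=\beta(x)f_a+x$, while $w(0)=0$. Hence, writing $n=f_a$ and $k_x=\beta(x)$ for $x\in\{1,\ldots,f_a-1\}$, we obtain
\[ \mathrm{Ap}(S(a),f_a)=\{0,\,k_1n+1,\,k_2n+2,\,\ldots,\,k_{n-1}n+(n-1)\}, \]
which is precisely the shape demanded by the hypothesis of Lemma~\ref{lem23}. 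Applying that lemma then gives $\mathrm{g}(S(a))=k_1+k_2+\cdots+k_{n-1}=\sum_{x=1}^{f_a-1}\beta(x)$, as claimed.

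The only point deserving an explicit remark is that each $k_x=\beta(x)$ is indeed a non-negative integer, as Lemma~\ref{lem23} requires; this is immediate from Remark~\ref{rem00}, since $\beta(x)$ is the number of summands in the Zeckendorf decomposition of $x$ (and $\beta(0)=0$). So there is no real obstacle: all the genuine work has already been done in establishing the explicit form of $\mathrm{Ap}(S(a),f_a)$ in Theorem~\ref{thm12} and in recording Selmer's formula in the convenient residue-class form of Lemma~\ref{lem23}.

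If one prefers to avoid invoking Lemma~\ref{lem23}, the same conclusion follows directly from Proposition~\ref{prop03}(2): with $w(x)=\beta(x)f_a+x$ one computes
\[ \mathrm{g}(S(a))=\frac{1}{f_a}\sum_{x=0}^{f_a-1}w(x)-\frac{f_a-1}{2}=\frac{1}{f_a}\sum_{x=1}^{f_a-1}\bigl(\beta(x)f_a+x\bigr)-\frac{f_a-1}{2}=\sum_{x=1}^{f_a-1}\beta(x)+\frac{f_a-1}{2}-\frac{f_a-1}{2}, \]
where the last two terms cancel because $\sum_{x=1}^{f_a-1}x=\frac{(f_a-1)f_a}{2}$. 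For consistency with the paper's flow I would present the argument through Lemma~\ref{lem23}, keeping this computation as an aside.
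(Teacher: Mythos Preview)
Your proof is correct and follows exactly the route indicated in the paper, namely combining Theorem~\ref{thm12} with Lemma~\ref{lem23}; the alternative computation via Proposition~\ref{prop03}(2) is also valid and amounts to the same thing, since Lemma~\ref{lem23} is itself an immediate consequence of that proposition.
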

	
	Let $B(x)$ be the set associated to $x\in\{1,\ldots,f_a-1\}$ in Remark~\ref{rem21}. Then it is clear that $\#(B(x)) = \beta(x)$. This fact, together Proposition~\ref{prop22} and Lemma~\ref{lem24}, leads to the next result.
	
	\begin{proposition}\label{prop27}
		If $a\in\mathbb{N}\setminus\{0,1,2\}$, then $\mathrm{g}(S(a)) = \sum_{i=1}^{\left\lfloor \frac{a-1}{2} \right\rfloor} i\binom{a-1-i}{i}$.
	\end{proposition}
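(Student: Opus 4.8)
The plan is to reindex the genus sum using the combinatorial bijection already set up, so that the formula falls out of Kaplansky's count. Concretely, I would rely on three facts stated just above: Lemma~\ref{lem24}, giving $\mathrm{g}(S(a)) = \sum_{x=1}^{f_a-1}\beta(x)$; Remark~\ref{rem21}, which says $x\mapsto B(x)$ is a bijection from $\{1,\ldots,f_a-1\}$ onto $\mathcal{F}(a) = \mathcal{F}_a(1)\cup\cdots\cup\mathcal{F}_a\!\left(\left\lfloor\frac{a-1}{2}\right\rfloor\right)$; and the observation that $\#(B(x)) = \beta(x)$.

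First I would apply Lemma~\ref{lem24} to write $\mathrm{g}(S(a)) = \sum_{x=1}^{f_a-1}\beta(x)$. Then, transporting this sum along the bijection of Remark~\ref{rem21} and using $\beta(x)=\#(B(x))$, I would obtain $\mathrm{g}(S(a)) = \sum_{B\in\mathcal{F}(a)}\#(B)$. Next I would note that the union $\mathcal{F}_a(1)\cup\cdots\cup\mathcal{F}_a\!\left(\left\lfloor\frac{a-1}{2}\right\rfloor\right)$ is disjoint, its blocks being distinguished by the common cardinality of their members, so the sum splits as $\sum_{i=1}^{\left\lfloor (a-1)/2\right\rfloor}\sum_{B\in\mathcal{F}_a(i)}\#(B) = \sum_{i=1}^{\left\lfloor (a-1)/2\right\rfloor} i\,\#(\mathcal{F}_a(i))$. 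Finally, Lemma~\ref{lem20} gives $\#(\mathcal{F}_a(i)) = \binom{a-1-i}{i}$ for each $i$ in this range, and substituting yields $\mathrm{g}(S(a)) = \sum_{i=1}^{\left\lfloor (a-1)/2\right\rfloor} i\binom{a-1-i}{i}$, as claimed.

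There is no serious obstacle: the argument is a bookkeeping exercise once Lemma~\ref{lem24}, Remark~\ref{rem21}, and Lemma~\ref{lem20} are in hand. The only points deserving a line of justification are that the union defining $\mathcal{F}(a)$ is genuinely disjoint and that Remark~\ref{rem21}'s correspondence is onto all of $\mathcal{F}(a)$ (equivalently $\#(\mathcal{F}(a)) = f_a-1$, which is consistent with the identity $f_a = \sum_{j=0}^{\left\lfloor (a-1)/2\right\rfloor}\binom{a-1-j}{j}$ used there) — both already established, so they can be cited rather than reproved.
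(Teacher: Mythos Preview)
Your proposal is correct and follows essentially the same route as the paper: the paper simply notes that $\#(B(x))=\beta(x)$ and cites Lemma~\ref{lem24} together with Proposition~\ref{prop22} (itself built from Remark~\ref{rem21} and Lemma~\ref{lem20}) to conclude. You have merely unpacked that citation into the explicit reindexing and cardinality split, which is exactly what the paper intends.
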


	In fact, we can explicitly compute the summation of the above proposition.
	
	\begin{theorem}\label{thm28}
		If $a\in\mathbb{N}\setminus\{0,1,2\}$, then $\mathrm{g}(S(a)) = \frac{a-2}{5} f_a + \frac{a}{5} f_{a-2}$.
	\end{theorem}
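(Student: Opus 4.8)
The plan is to reduce the statement to a second-order recurrence. Writing $g_a:=\mathrm{g}(S(a))$, I will first prove
\[
g_a \;=\; g_{a-1}+g_{a-2}+f_{a-2}\qquad\text{for all }a\ge 5,
\]
and then check that the candidate $h(a):=\frac{a-2}{5}f_a+\frac{a}{5}f_{a-2}$ satisfies the same recurrence and the same initial values, so that an induction on $a$ finishes the proof.

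To obtain the recurrence I would start from Lemma~\ref{lem24}, which gives $g_a=\sum_{x=1}^{f_a-1}\beta(x)$, and split the summation range at $f_{a-1}$:
\[
g_a=\sum_{x=1}^{f_{a-1}-1}\beta(x)+\sum_{x=f_{a-1}}^{f_a-1}\beta(x)=g_{a-1}+\sum_{x=f_{a-1}}^{f_a-1}\beta(x).
\]
For $f_{a-1}\le x\le f_a-1$ one has $\gamma(x)=a-1$ (because $f_{a-1}\le x<f_a$ and the Fibonacci numbers are strictly increasing from index $2$ on), so Lemma~\ref{lem13} yields $\beta(x)=\beta(x-f_{a-1})+1$. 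As $x$ runs over the $f_{a-2}$ integers $f_{a-1},\dots,f_a-1$, the difference $x-f_{a-1}$ runs exactly over $0,1,\dots,f_{a-2}-1$; hence, using $\beta(0)=0$ and Lemma~\ref{lem24} once more,
\[
\sum_{x=f_{a-1}}^{f_a-1}\beta(x)=f_{a-2}+\sum_{y=0}^{f_{a-2}-1}\beta(y)=f_{a-2}+g_{a-2},
\]
which proves the displayed recurrence (the hypothesis $a\ge 5$ only guarantees $a-2\ge 3$, so that all three terms are covered by Lemma~\ref{lem24}).

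It then remains to verify two routine facts. First, $h$ satisfies the same recurrence: expanding $h(a-1)+h(a-2)+f_{a-2}$ and rewriting $f_{a-3}=f_{a-1}-f_{a-2}$ and $f_{a-4}=2f_{a-2}-f_{a-1}$, one collects the coefficient of $f_{a-1}$ as $\frac{(a-3)+(a-1)-(a-2)}{5}=\frac{a-2}{5}$ and that of $f_{a-2}$ as $\frac{-(a-1)+(a-4)+2(a-2)+5}{5}=\frac{2a-2}{5}$, which is precisely $h(a)=\frac{a-2}{5}f_{a-1}+\frac{2a-2}{5}f_{a-2}$. Second, the base cases: from Proposition~\ref{prop27} (or directly from $g_a=\sum_{x=1}^{f_a-1}\beta(x)$) one gets $g_3=1=h(3)$ and $g_4=2=h(4)$. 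A strong induction on $a\ge 5$ then gives $g_a=h(a)$ for every $a\in\mathbb{N}\setminus\{0,1,2\}$, as claimed.

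There is no genuine obstacle here; the only points demanding a little care are the index bookkeeping in the split of the $\beta$-sum — namely that $\gamma(x)=a-1$ throughout the upper block and that $x\mapsto x-f_{a-1}$ is a bijection from $\{f_{a-1},\dots,f_a-1\}$ onto $\{0,\dots,f_{a-2}-1\}$ — and the short coefficient matching for $h$. (Alternatively, the recurrence for $g_a$ could be derived from the binomial expression in Proposition~\ref{prop27} by a single application of Pascal's rule together with the identity $f_n=\sum_{j}\binom{n-1-j}{j}$ used in Remark~\ref{rem21}, but the $\beta$-sum argument is cleaner.)
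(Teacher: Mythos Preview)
Your proof is correct and follows the same overall architecture as the paper's: establish the recurrence $g_a=g_{a-1}+g_{a-2}+f_{a-2}$ for $a\ge 5$, then verify by induction that the closed form $h(a)$ satisfies the same recurrence and initial values. The difference lies in how the recurrence is obtained. The paper goes through Proposition~\ref{prop27}, writing $g_a=\sum_i i\binom{a-1-i}{i}$, then applies Pascal's rule to each binomial and reindexes, ultimately using the identity $f_n=\sum_j\binom{n-1-j}{j}$ to recognise the leftover sum as $f_{a-2}$. You bypass the binomial coefficients entirely: starting from Lemma~\ref{lem24}, you split $\sum_{x=1}^{f_a-1}\beta(x)$ at $x=f_{a-1}$ and use Lemma~\ref{lem13} on the upper block, which immediately yields $g_{a-2}+f_{a-2}$ via the bijection $x\mapsto x-f_{a-1}$. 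This is the alternative you yourself mention parenthetically, and it is indeed the more transparent of the two --- it works directly with the Zeckendorf structure and avoids the combinatorial detour through Proposition~\ref{prop27} and Lemma~\ref{lem20}. The paper's route, on the other hand, makes the connection to the diagonal-sum identity for Fibonacci numbers explicit. Either way, the induction step and base-case checks are identical.
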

	
	\begin{proof}
		Let us first see that, if $a\geq5$, then
		\[ \mathrm{g}(S(a)) = \mathrm{g}(S(a-1)) + \mathrm{g}(S(a-2)) + f_{a-2}. \]
		
		Let us take $a=2k+3$ for $k\in\mathbb{N}\setminus\{0\}$. Then, by Proposition~\ref{prop27}, we have that $\mathrm{g}(S(a)) = \mathrm{g}(S(2k+3)) = \sum_{i=1}^{k+1} i\binom{2k+2-i}{i}$ and, hereafter,
		\[ \begin{split}
			\mathrm{g}(S(2k+3)) & = \sum_{i=1}^{k} i \left[ \binom{2k+1-i}{i} + \binom{2k+1-i}{i-1} \right] + (k+1)\binom{k+1}{k+1} \\
			& = \sum_{i=1}^{k} i \binom{2k+1-i}{i} + \sum_{i=1}^{k} i \binom{2k+1-i}{i-1} + (k+1)\binom{k}{k} \\
			& = \mathrm{g}(S(2k+2)) + \sum_{i=1}^{k+1} i \binom{2k+1-i}{i-1} \\
			& = \mathrm{g}(S(2k+2)) + \sum_{i=0}^{k} i \binom{2k-i}{i} + \sum_{i=0}^{k} \binom{2k-i}{i} \\
			& = \mathrm{g}(S(2k+2)) + \mathrm{g}(S(2k+1)) + f_{2k+1}.
		\end{split} \]
		
		If $a=2k+4$, with $k\in\mathbb{N}\setminus\{0\}$, the equality check is similar, so we omit it.
		
		To conclude that $\mathrm{g}(S(a)) = \frac{a-2}{5} f_a + \frac{a}{5} f_{a-2}$, we use mathematical induction. In fact, we easily have the equality for $a=3$ and $a=4$. Now, if $a\geq5$ and we assume that the equality is true for all $k\in\{3,4,\ldots,a-1\}$, then
		\[ \begin{split}
			\mathrm{g}(S(a)) & = \mathrm{g}(S(a-1)) + \mathrm{g}(S(a-2)) + f_{a-2} \\
			& = \frac{a-3}{5} f_{a-1} + \frac{a-1}{5} f_{a-3} + \frac{a-4}{5} f_{a-2} + \frac{a-2}{5} f_{a-4} + f_{a-2} \\
			& = \frac{a-2}{5} \left( f_{a-1} + f_{a-2} \right) + \frac{a}{5} \left( f_{a-3} + f_{a-4} \right) - \frac{f_{a-1} - 3f_{a-2} + f_{a-3} + 2f_{a-4}}{5} \\
			& = \frac{a-2}{5} f_a + \frac{a}{5} f_{a-2},
		\end{split} \]
		considering that $f_{a-1} - 3f_{a-2} + f_{a-3} + 2f_{a-4} = -2f_{a-2} + 2f_{a-3} + 2f_{a-4} = 0$.
	\end{proof}
	
	\begin{example}\label{exmp29}
		By Example~\ref{exmp07}, we know that $S(7)=\langle 13,14,15,16,18,21 \rangle$. From Theorem~\ref{thm28}, we have that $\mathrm{g}(S(7)) = f_7+\frac{7}{5}f_5 = 13+7 =20$.
	\end{example}
	
	\begin{remark}
		It is easy to check that Theorem~\ref{thm28} is also true for $a=2$.
	\end{remark}
	
	Since we know explicitly expressions for the embedding dimension, the Frobenius number, and the genus of $S(a)$, we can check that this family of numerical semigroups satisfies the Wilf's conjecture (see \cite{wilf}). If $S$ is a numerical semigroup, then we denote by $\mathrm{n}(S)$ the cardinality of the set $\{s\in S \mid s<\mathrm{F}(S) \}$.
	
	\begin{corollary}\label{cor30}
		If $a\in\mathbb{N}$, then $\mathrm{F}(S(a)) + 1 \leq \mathrm{e}(S(a)) \mathrm{n}(S(a))$. 
	\end{corollary}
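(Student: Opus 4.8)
The plan is to turn Wilf's inequality into an elementary estimate between Fibonacci numbers by using the explicit formulas already established for $\mathrm{e}(S(a))$, $\mathrm{F}(S(a))$ and $\mathrm{g}(S(a))$, together with the well-known identity $\mathrm{F}(S)+1=\mathrm{n}(S)+\mathrm{g}(S)$ valid for every numerical semigroup $S$. First I would dispose of $a\in\{0,1,2\}$, where $S(a)=\mathbb{N}$, $\mathrm{F}(S(a))=-1$ and $\mathrm{n}(S(a))=0$, so the inequality reads $0\leq 0$; hence assume $a\geq 3$. The identity $\mathrm{F}(S)+1=\mathrm{n}(S)+\mathrm{g}(S)$ is immediate: the $\mathrm{F}(S)+1$ integers in $\{0,1,\ldots,\mathrm{F}(S)\}$ split into the gaps of $S$, all of which lie in this range ($\mathrm{g}(S)$ of them), and the elements of $S$ smaller than $\mathrm{F}(S)$, since $\mathrm{F}(S)\notin S$ ($\mathrm{n}(S)$ of them). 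Combining this with $\mathrm{e}(S(a))=a-1$ (Corollary~\ref{cor06}), the statement $\mathrm{F}(S(a))+1\leq\mathrm{e}(S(a))\,\mathrm{n}(S(a))$ becomes $\mathrm{n}(S(a))+\mathrm{g}(S(a))\leq(a-1)\,\mathrm{n}(S(a))$, i.e. $\mathrm{g}(S(a))\leq(a-2)\,\mathrm{n}(S(a))$, and substituting $\mathrm{n}(S(a))=\mathrm{F}(S(a))+1-\mathrm{g}(S(a))$ once more, it is equivalent to
\[ (a-1)\,\mathrm{g}(S(a))\leq(a-2)\bigl(\mathrm{F}(S(a))+1\bigr). \]

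Next I would insert the formulas of Theorem~\ref{thm17} and Theorem~\ref{thm28}, reducing the goal to
\[ (a-1)\left(\tfrac{a-2}{5}f_a+\tfrac{a}{5}f_{a-2}\right)\leq(a-2)\left\lfloor\tfrac{a-1}{2}\right\rfloor f_a . \]
To treat this uniformly in the parity of $a$, I would use the two elementary bounds $\left\lfloor\frac{a-1}{2}\right\rfloor\geq\frac{a-2}{2}$ and $f_{a-2}\leq\tfrac12 f_a$ (the latter since $f_a=f_{a-1}+f_{a-2}\geq 2f_{a-2}$ for $a\geq 3$). They reduce the inequality to $\frac{(a-1)(3a-4)}{10}\,f_a\leq\frac{(a-2)^2}{2}\,f_a$, that is, to $(a-1)(3a-4)\leq 5(a-2)^2$, or $2a^2-13a+16\geq 0$, which holds for every integer $a\geq 5$ (the larger root being $\frac{13+\sqrt{41}}{4}<5$).

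It then only remains to check $a=3$ and $a=4$ directly: there $S(a)$ equals $\langle 2,3\rangle$ and $\langle 3,4,5\rangle$ respectively, and one verifies $\mathrm{F}(S(a))+1=\mathrm{e}(S(a))\,\mathrm{n}(S(a))$ by hand, so that the bound is attained. The only mildly delicate point in the whole argument is precisely that the crude estimate $f_{a-2}\leq\tfrac12 f_a$ is not sharp enough for $a=3$ and $a=4$, which forces those two cases to be separated out; apart from that, the proof is a routine verification.
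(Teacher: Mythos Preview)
Your proof is correct and follows essentially the same route as the paper: reduce Wilf's inequality to $(a-1)\,\mathrm{g}(S(a))\leq(a-2)(\mathrm{F}(S(a))+1)$ via $\mathrm{F}+1=\mathrm{n}+\mathrm{g}$, substitute the explicit formulas, and finish with an elementary estimate bounding $f_{a-2}$ by a multiple of $f_a$. The only difference is that you use the sharper bound $f_{a-2}\leq\tfrac12 f_a$ (together with $\lfloor\tfrac{a-1}{2}\rfloor\geq\tfrac{a-2}{2}$) where the paper uses $f_{a-2}\leq f_a$, so your polynomial inequality kicks in already at $a=5$ and you only need to hand-check $a\in\{3,4\}$, whereas the paper hand-checks $a\in\{3,\ldots,10\}$.
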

	
	\begin{proof}
		If $a\in\{0,1,2\}$, then $S(a) = \mathbb{N}$ and, therefore, the result is obvious.
		
		If $a\geq3$, we use an equivalent inequality. Indeed, since $\mathrm{g}(S) + \mathrm{n}(S) = \mathrm{F}(S) + 1$ for any numerical semigroup $S$, then
		\[ \mathrm{F}(S) + 1 \leq \mathrm{e}(S) \mathrm{n}(S) \Leftrightarrow \mathrm{e}(S) \mathrm{g}(S) \leq \left( \mathrm{e}(S)-1 \right) \left( \mathrm{F}(S)+1 \right). \]
		Now, by Corollary~\ref{cor06}, Theorem~\ref{thm17}, and Theorem~\ref{thm28}, we have that 
		\[ \mathrm{e}(S(a)) \mathrm{g}(S(a)) \leq \left( \mathrm{e}(S(a))-1 \right) \left( \mathrm{F}(S(a))+1 \right) \Leftrightarrow \]
		\[ (a-1) \frac{(a-2)f_a+af_{a-2}}{5} \leq (a-2) \left\lfloor \frac{a-1}{2} \right\rfloor f_a. \]
		By direct verification, the last inequality is true for $a\in\{3,\ldots,10\}$. If $a\geq11$, since $f_{a-2}\leq f_a$, it is enough to see that $2(a-1)^2\leq \frac{5}{2}(a-2)^2$, which is equivalent to $20\leq (a-6)^2$.
	\end{proof}

	\section*{Acknowledgement}
	
	Both authors are supported by the project MTM2017-84890-P (funded by Mi\-nis\-terio de Econom\'{\i}a, Industria y Competitividad and Fondo Europeo de Desarrollo Regional FEDER) and by the Junta de Andaluc\'{\i}a Grant Number FQM-343.


\begin{thebibliography}{00}
		
		\bibitem{apery} R.~Ap\'ery, Sur les branches superlin\'{e}aires des courbes alg\'{e}briques, \textit{C.~R. Acad. Sci. Paris} \textbf{222} (1946), 1198--1200.
		
		\bibitem{curtis} F.~Curtis, On formulas for the Frobenius number of a numerical semigroup, \textit{Math. Scand.} \textbf{67} (1990), 190--192.
		
		\bibitem{decomposition} K.~Cordwell, M.~Hlavacek, C.~Huynh, S.~J.~Miller, C.~Perterson, and Y.~N.~Truong~Vu, On summand minimality of generalized Zeckendorf decompositions, {\tt arXiv:1608.08764v2 [math.NT]}.
		
		\bibitem{fel} L.~G.~Fel, Symmetric numerical semigroups generated by Fibonacci and Lucas triples, \textit{Integers} \textbf{9} (2009), \#A09 107--116.
		
		\bibitem{gardner} M.~Gardner, The multiple fascinations of the Fibonacci sequence, \textit{Sci. Am.} \textbf{220} (1969), 116--120.
		
		\bibitem{honsberger} R.~Honsberger, A second look at the Fibonacci and Lucas numbers. In: \textit{Mathematical gems III}, (Math. Assoc. Amer. Press,  Washington DC, 1985).
		
		\bibitem{kaplansky} I.~Kaplansky, Solution of the ``Probl\`eme des m\'enages'', \textit{Bull. Amer. Math. Soc.} \textbf{49} (1943), 784--785.
		
		\bibitem{marin} J.~M.~Mar\'{\i}n, J.~L.~Ram\'{\i}rez~Alfons\'{\i}n, M.~P.~Revuelta, On the Frobenius number of Fibonacci numerical semigroups, \textit{Integers} \textbf{7} (2007), \#A14 (7 pages).
		
		\bibitem{matthews} G.~L.~Matthews, Frobenius numbers of generalized Fibonacci semigroups. In: B.~Landman, M.~B.~Nathanson, J.~Nešetril, R.~J.~Nowakowski, C.~Pomerance, and A.~Robertson, \textit{Combinatorial Number Theory: Proceedings of the `Integers Conference 2007', Carrollton, Georgia, October 24-27, 2007} (De Gruyter, New York, 2009).
		
		\bibitem{alfonsin} J.~L.~Ram\'{\i}rez Alfons\'{\i}n, \textit{The Diophantine Frobenius Problem}, Oxford Lectures Series in Mathematics and its Applications, vol. 30 (Oxford Univ. Press, Oxford, 2005).
		
		\bibitem{springer} J.~C.~Rosales and P.~A.~Garc\'{\i}a-S\'anchez, \textit{Numerical Semigroups}, Developments in Mathematics, vol. 20 (Springer, New York, 2009).
		
		\bibitem{selmer} E.~S.~Selmer, On the linear diophantine problem of Frobenius, \textit{J. Reine Angew. Math.} \textbf{293/294} (1977), 1--17.
		
		\bibitem{sylvester} J.~J.~Sylvester, Problem 7382, \textit{The Educational Times, and Journal of the College of Preceptors, New Ser.}, \textbf{36}(266) (1883), 177. Solution by W.~J.~Curran Sharp, ibid., \textbf{36}(271) (1883), 315.
		
		\bibitem{wilf} H.~S.~Wilf, A circle-of-lights algorithm for the ``money-changing problem'', \textit{Amer. Math. Monthly} \textbf{85} (1978), 562--565.
		
		\bibitem{zeckendorf} E.~Zeckendorf, Repr\'esentation des nombres naturels par une somme des nombres de Fibonacci ou de nombres de Lucas, \textit{Bull. Soc. Roy. Sci. Li\`ege} \textbf{41} (1972), 179--182.
		
	\end{thebibliography}
\end{document}